
\documentclass[12pt]{amsart}
\usepackage{amssymb, amstext, amscd, amsmath}

\makeatletter
\def\@cite#1#2{{\m@th\upshape\bfseries%
[{#1\if@tempswa{\m@th\upshape\mdseries, #2}\fi}]}}
\makeatother


\theoremstyle{plain}
\newtheorem{thm}{Theorem}[section]
\newtheorem{cor}[thm]{Corollary}
\newtheorem{prop}[thm]{Proposition}
\newtheorem{lem}[thm]{Lemma}

\theoremstyle{definition}
\newtheorem{rem}[thm]{Remark}

\newtheorem{defn}[thm]{Definition}



\newcommand{\bbD}{{\mathbb{D}}}

\newcommand{\bbN}{{\mathbb{N}}}

\newcommand{\bbT}{{\mathbb{T}}}
\newcommand{\bbZ}{{\mathbb{Z}}}

  \newcommand{\A}{{\mathcal{A}}}
  \newcommand{\B}{{\mathcal{B}}}
  \newcommand{\C}{{\mathcal{C}}}

  \newcommand{\F}{{\mathcal{F}}}
  \newcommand{\G}{{\mathcal{G}}}
\renewcommand{\H}{{\mathcal{H}}}
  
  \newcommand{\J}{{\mathcal{J}}}
  \newcommand{\K}{{\mathcal{K}}}
\renewcommand{\L}{{\mathcal{L}}}
  \newcommand{\M}{{\mathcal{M}}}
  
\renewcommand{\O}{{\mathcal{O}}}

  \newcommand{\T}{{\mathcal{T}}}

  \newcommand{\X}{{\mathcal{X}}}


\renewcommand{\phi}{\varphi}
\newcommand{\upchi}{{\raise.35ex\hbox{\ensuremath{\chi}}}}

\def\ga{\alpha}

\newcommand{\nor}[1]{\left\Vert #1\right\Vert}


\newcommand{\fA}{{\mathfrak{A}}}
\newcommand{\fB}{{\mathfrak{B}}}

\newcommand{\fH}{{\mathfrak{H}}}

\newcommand{\fV}{{\mathfrak{V}}}


\newcommand{\rC}{{\mathrm{C}}}

\newcommand{\foral}{\text{ for all }}

\newcommand{\qfor}{\quad\text{for}\quad}
\newcommand{\qforal}{\quad\text{for all}\,}

\newcommand{\Alg}{\operatorname{Alg}}
\newcommand{\alg}{\operatorname{alg}}

\newcommand{\id}{{\operatorname{id}}}
\newcommand{\is}{{\operatorname{is}}}

\DeclareMathOperator*{\sotlim}{\textsc{sot}--lim}


\newcommand{\ca}{\mathrm{C}^*}
\newcommand{\cenv}{\mathrm{C}^*_{\text{env}}}

\newcommand{\ol}{\overline}
\newcommand{\tpi}{\widetilde{\pi}}
\newcommand{\hpi}{\widehat{\pi}}

\newcommand{\sot}{\textsc{sot}}

\begin{document}
\title[Semicrossed products]{Semicrossed products of operator algebras and their $\ca$-envelopes}

\author[E. Kakariadis]{Evgenios Kakariadis}
\address{Department of Mathematics\\University of Athens\\ 15784 Athens \\GREECE}
\email{mavro@math.uoa.gr}

\author[E.G. Katsoulis]{Elias~G.~Katsoulis}
\address{ Department of Mathematics\\University of Athens\\ 15784 Athens \\GREECE \vspace{-2ex}}
\address{\textit{Alternate address:} Department of Mathematics\\East Carolina University\\ Greenville, NC 27858\\USA}
\email{katsoulise@ecu.edu}

\begin{abstract}
Let $\A$ be a unital operator algebra and let $\alpha$ be an automorphism of $\A$ that extends to a $*$-automorphism
of its $\ca$-envelope $\cenv (\A)$. In this paper we introduce the isometric semicrossed product $\A \times_{\alpha}^{\is} \bbZ^+ $ and we show that $\cenv(\A \times_{\alpha}^{\is} \bbZ^+ ) \simeq \cenv ( \A ) \times_{\alpha} \bbZ$. In contrast, the $\ca$-envelope of the familiar contractive
semicrossed product $\A \times_{\alpha} \bbZ^+ $ may not equal $\cenv ( \A ) \times_{\alpha} \bbZ$.
Our main tool for calculating $\ca$-envelopes for
semicrossed products is the concept of a relative semicrossed product of an operator algebra,
which we explore in the more general context of injective endomorphisms.

As an application, we extend the main result of \cite{DavKatsdilation} to tensor algebras of $\ca$-correspondences. We show that if $\T_{\X}^{+}$ is the tensor algebra of a $\ca$-correspondence $(\X, \fA)$ and $\alpha$ a completely isometric
automorphism of $\T_{\X}^{+}$ that fixes the diagonal elementwise, then the contractive semicrossed product satisfies $ \cenv(\T_{\X}^{+} \times_{\alpha} \bbZ^+  )\simeq \O_{\X} \times_{\alpha} \bbZ$, where $\O_{\X}$ denotes the Cuntz-Pimsner algebra of $(\X, \fA)$.
\end{abstract}

\thanks{2000 {\it  Mathematics Subject Classification.}
47L55, 47L40, 46L05, 37B20}
\thanks{{\it Key words and phrases:} semicrossed product, crossed product}
\thanks{Second author was partially supported by a grant from ECU}

\date{}
\maketitle


\section{Introduction and preliminaries}        \label{Intro}

In this paper, we offer three choices for defining the semicrossed product of an operator algebra $\A$ by a unital, completely contractive endomorphism $\alpha$ of $\A$ (Definitions \ref{def:sem} and \ref{rel}.) In all cases, the resulting algebras contain a completely
isometric copy of $\A$ and a "universal" operator that implements the covariance relations.
In the case where $\A$ is a $\ca$-algebra and $\alpha$ preserves adjoints, all three choices
 produce the same operator algebra, Peters' semicrossed product of a $\ca$-algebra \cite{Pet} by an endomorphism. (Semicrossed products of $\ca$-algebras have been under investigation by various authors \cite{AlaP, Arv, ArvJ, DeAPet,DKconj, DKsurvey, HadH, MM, Pet2}, starting with the work of Arveson \cite{Arv} in the late sixties.) In the general
(non-selfadjoint) case however, the semicrossed products we introduce here may lead to non-isomorphic operator algebras.
The main objective of this paper
is to clarify the relation between
the three semicrossed products and calculate their $\ca$-envelope, whenever possible.

The present paper is a continuation of the recent work of Davidson and the second named author in \cite{DavKatsdilation}. In the language of the present paper, the main objective of \cite{DavKatsdilation} was to show that in the special case where $\A$ is Popescu's non-commutative disc algebra \cite{Pop3} and $\alpha$ a completely isometric automorphism of $\A$, all three semicrossed products coincide. One of the main results of this paper, Theorem \ref{main}, shows that two of these semicrossed products, the isometric and the relative one, coincide for \textit{any} operator algebra $\A$ and \textit{any} completely isometric automorphism $\alpha$ of $\A$. To prove this, we had to abandon the rather intricate but ad-hoc arguments of the second half of \cite{DavKatsdilation} and instead adopt an abstract approach. Theorem \ref{main} focuses now any further research on semicrossed products to the study of the other two, the isometric and the contractive semicrossed product. For these two, there are examples to show that they do not coincide in general (Remark \ref{counter}). Nevertheless, with Theorem \ref{main} in hand, we show that they do coincide in the case of a tensor algebra of a $\ca$-correspondence and and a completely isometric isomorphism of the algebra that fixes its diagonal (Corollary \ref{main}). This not only generalizes the main result of \cite{DavKatsdilation} to a broader context but also paves the way for additional results of this kind to come in the future.

The various semicrossed products we define in this paper
are actually closed images of the following universal semicrossed product under concrete representations.

\begin{defn}   \label{def:sem}
Let $\alpha$ be a unital, completely contractive endomorphism of an operator algebra $\A$.
A contractive (isometric) covariant representation $(\pi, K)$ of $(\A,\alpha)$
is a completely contractive representation
$\pi$ of $\A$ on a Hilbert space $\H$ and a contraction (resp. isometry) $K \in \B(\H)$ so that
\[ \pi(A) K = K \pi(\alpha(A)) \qforal A \in \A .\]
The contractive (resp. isometric) semicrossed product $\A \times_{\alpha} \bbZ^+$ (resp. $\A \times_{\alpha}^{\is} \bbZ^+$) for the system $(\A, \alpha)$
is the
universal operator algebra generated by a copy of $\A$ and a contraction (resp. isometry) $\fV$
so that $A\fV= \fV \alpha(A)$, for all $A \in \A$.
\end{defn}

The contractive
semicrossed product has, by definition, a rich representation theory which unfortunately makes it very intractable.
This was first observed in \cite{DavKatsdilation} based on the famous example of Varopoulos \cite{Var} regarding
three commuting contractions that do not satisfy the usual von Neumann inequality. Nevertheless, there are significant cases where the
contractive semicrossed product has been completely identified. These include the case where $\A$ is a $\ca$-algebra \cite{Kakar, Pet} and the case where
$\A$ is the non-commutative disc algebra $\fA_n$ and $\alpha$ is an isometric automorphism \cite{DavKatsdilation}.

The isometric semicrossed product is the (closed) image of $\A \times_{\alpha} \bbZ^+$ under the representation which restricts to the entries
where the contractions $K$ are actually isometries. We believe that this is
a more tractable object and as we shall see, in the case where $\alpha$ is a completely isometric automorphism, i.e., it extends to an automorphism
of the $\ca$-envelope $\cenv (\A)$  of $\A$, we can identify the $\ca$-envelope of $\A \times_{\alpha} \bbZ^+$ as the crossed product
$\ca$-algebra $\cenv ( \A ) \times_{\alpha} \bbZ$. The main tool for establishing this result is the concept of a relative semicrossed
product.

Recall that a $\ca$ algebra $\C$ is said to be a \textit{$\ca$-cover} for a subalgebra $\A \subseteq \C$
provided that $\A$ generates $\C$ as a $\ca$-algebra, i.e.,
 $\C = C^*(\A)$. If $\C$ is a $\ca$-cover for $\A$,
 then $\J_{\A}$ will denote the \v{S}ilov ideal of $\A$ in $\C$. Therefore,
 $\cenv (\A)=\C/\J_{\A}$ and the restriction of the natural projection
 $q:\C\rightarrow \C/\J_{\A}$ on $\A$ is a completely isometric representation of $\A$.
 (Any ideal $\J \subseteq \C$, with the property that the restriction of
 the natural projection $\C\rightarrow \C/\J$ on $\A$ is a complete isometry, is called a \textit{boundary ideal}
 and $\J_{\A}$ is the largest such ideal.)

\begin{defn}   \label{rel}
Let $\A$ be an operator algebra, $\C$ a $\ca$-cover of $\A$ and let $\alpha$ be an $*$-endomorphism of $\C$ that leaves $\A$ invariant. The
subalgebra of Peters' semicrossed product $\C \times_{\alpha} \bbZ^+$, which is generated by $\A \subseteq \C \subseteq  \C \times_{\alpha} \bbZ^+$ and
the universal isometry $\fV \in \C \times_{\alpha} \bbZ^+$, is denoted by $\A \times_{\C , \alpha} \bbZ^+$ and is said to be a relative semicrossed product for the system $(\A, \alpha)$.
\end{defn}

Therefore, the relative semicrossed product $\A \times_{\C \, , \alpha} \bbZ^+$ comes from the representation of $\A \times_{\alpha} \bbZ^+$ that restricts
to the entries where $\pi$ and $\alpha$ are $*$-extendable to $\C$ and the contraction $K$ satisfies the covariance relation with these extensions.
It seems plausible that non-isomorphic $\ca$-covers for $\A$ and varying extensions for the endomorphism $\alpha$ could produce non-isomorphic
relative semicrossed products. It turns out that under a reasonable technical requirement, i.e., invariance of the Shilov ideal, all such relative semicrossed products are
completely isometrically isomorphic (Proposition \ref{cis}). In particular, this requirement is satisfied when $\alpha$ is a completely isometric automorphism of $\A$; in that case all relative semicrossed products for $(\A, \alpha)$ are completely isometrically isomorphic to each other.

\section{The relative semicrossed product and its $\ca$-envelope}

We begin this section with some preliminary results. The first one is a standard result that shows how to lift an injective $*$-endomorphism of a $\ca$-algebra to an automorphism of a possibly larger $\ca$-algebra.

\begin{prop}\label{lift_end}
  If $\alpha$ is an injective endomorphism of a C*-algebra $\fA$,
  then there is a unique triple $(\fB,\beta,j)$ (up to isomorphism)
  where $\fB$ is a C*-algebra, $\beta$ is an automorphism of $\fB$
  and $j$ is a $*$-monomorphism of $\fA$ into $\fB$ such that
  $\beta j = j \alpha$ and
  $\fB = \ol{ \bigcup_{k\ge0} \beta^{-k} j(\fA )}$.

  To paraphrase, there is a unique minimal C*-algebra $\fB$
  containing $\fA$ with an automorphism $\beta$ satisfying
  $\beta|_\fA = \alpha$.
  \end{prop}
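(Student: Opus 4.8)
The plan is to construct $\fB$ as a direct limit of copies of $\fA$ along the maps $\alpha$, which is the standard trick for inverting an injective endomorphism. Concretely, consider the inductive system
\[
\fA \xrightarrow{\ \alpha\ } \fA \xrightarrow{\ \alpha\ } \fA \xrightarrow{\ \alpha\ } \cdots
\]
and let $\fB = \varinjlim (\fA, \alpha)$ be its $\ca$-inductive limit, with canonical $*$-homomorphisms $j_k \colon \fA \to \fB$ satisfying $j_{k+1}\circ \alpha = j_k$. Since $\alpha$ is injective, each connecting map is injective, hence each $j_k$ is a $*$-monomorphism; set $j = j_0$. The endomorphism $\alpha$ induces a map on the inductive system (shifting the $k$-th copy to the $(k+1)$-st by the identity, which intertwines the connecting maps), and one checks this induced map $\beta \colon \fB \to \fB$ is a $*$-automorphism, with inverse induced by shifting down. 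By construction $\beta\circ j_k = j_{k-1}$ for $k \geq 1$, so in particular $\beta \circ j_1 = j_0 = j$; combined with $j_1 \circ \alpha = j_0 = j$ this gives $\beta \circ j = \beta \circ j_1 \circ \alpha = j \circ \alpha$, i.e.\ $\beta|_{j(\fA)} = \alpha$ after identifying $\fA$ with $j(\fA)$. Finally $\fB = \ol{\bigcup_{k\ge 0} j_k(\fA)} = \ol{\bigcup_{k\ge 0}\beta^{-k} j(\fA)}$, since $j_k = \beta^{-k}\circ j$ on $\fA$, which gives the asserted minimality/density condition.

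For uniqueness, suppose $(\fB', \beta', j')$ is another such triple. Define $\phi_k \colon j_k(\fA) \to \fB'$ by $\phi_k(j_k(a)) = (\beta')^{-k} j'(a)$; the compatibility $j_{k+1}\circ\alpha = j_k$ together with $\beta' j' = j'\alpha$ shows these are consistent on the nested union $\bigcup_k j_k(\fA)$, hence assemble to a single $*$-homomorphism on a dense subalgebra, which extends by continuity to a $*$-homomorphism $\phi\colon \fB \to \fB'$. It is isometric (being a $*$-homomorphism injective on each $j_k(\fA)$, by injectivity of $j'$ and $\beta'$), it is surjective because its range is dense (it hits every $(\beta')^{-k}j'(\fA)$) and closed, and it intertwines $\beta$ with $\beta'$ and $j$ with $j'$ by construction. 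So $\phi$ is the required isomorphism of triples, and it is the unique one since it is forced on the dense subalgebra $\bigcup_k j_k(\fA)$.

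I do not expect a genuine obstacle here; this is a folklore construction (it appears, e.g., in work on crossed products by endomorphisms and on Pimsner dilations). The only points requiring a little care are verifying that the induced map $\beta$ on the inductive limit is genuinely invertible — this is where injectivity of $\alpha$ is used, as otherwise the ``shift down'' map is not well defined — and checking that $\phi$ in the uniqueness argument is isometric, for which one uses that a $*$-homomorphism between $\ca$-algebras that is injective is automatically isometric, applied on each $j_k(\fA)$ and then passing to the closure.
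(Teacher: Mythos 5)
Your construction is correct and is essentially the paper's own proof: the $\ca$-inductive limit of copies of $\fA$ along $\alpha$, with $\beta$ the induced shift automorphism and density of $\bigcup_k \beta^{-k}j(\fA)$, plus a uniqueness argument which the proposition asserts but the paper does not write out, so that part is a welcome addition. One labeling slip to fix: the map induced by sending the $k$-th copy to the $(k+1)$-st via the identity satisfies $\beta j_k = j_{k+1}$ and is therefore the \emph{inverse} of the automorphism you want, whereas the relations you actually use afterwards, $\beta j_k = j_{k-1}$ (equivalently, $\beta$ induced by applying $\alpha$ in every coordinate, which is what the paper does), are the correct ones, so the argument stands as written once the two shifts are swapped; note also that $\beta$ is invertible on the limit even when $\alpha$ is not injective --- injectivity of $\alpha$ is what makes $j$ (and each $j_k$) a $*$-monomorphism, i.e.\ what lets $\fB$ contain a copy of $\fA$.
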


  \begin{proof}
   Consider the inductive limit $\fB$ of the system
  \[ \begin{CD}
 \fA_1 @> \alpha_1 >> \fA_2 @> \alpha_2 >> \fA_3 @> \alpha_3 >> \fA_4
 @> \alpha_4 >> \cdots ,
\end{CD}\]
where $\fA_i=\fA$ and $\alpha_i = \alpha$, for all $i \in \bbN$.
Let $j_i$ be the associated $*$-monomorphism from $\fA = \fA_i$ to $\fB$. This map is defined
as $j_i(A)=
(0,0,\dots , 0, A, \alpha (A), \alpha^{2} (A), \dots)$, with the
understanding that the infinite tuple in the definition
signifies the appropriate equivalence class. Define $j=j_1$.

The system
\[
\begin{CD}
 \fA_1 @> \alpha_1 >> \fA_2 @> \alpha_2 >> \fA_3 @> \alpha_3 >> \cdots \\
 @V\alpha VV            @V\alpha VV              @V\alpha VV                   \\
 \fA_1 @> \alpha_1 >> \fA_2 @> \alpha_2 >> \fA_3 @> \alpha_3 >> \cdots
\end{CD}
\]
gives rise to an $*$-automorphism $\beta$ of $\fB$ defined as
\[
\beta (A_1 , A_2 , A_3 , \dots ) =  ( \alpha (A_1) , \alpha (A_2) , \alpha (A_3) , \dots ), \quad
A_i \in \fA_i  , i \in \bbN.
\]
Clearly, $\beta j = j \alpha$. The inverse
of $\beta$ on $\bigcup_{k\ge1} j_k (\fA_k)$ satisfies
\[
\beta^{-1} (A_1 , A_2 , A_3 , \dots ) =  (0, \alpha (A_1) , \alpha (A_2) , \alpha (A_3) , \dots ), \quad
A_i \in \fA_i  , i \in \bbN.
\]
and so, if $A \in \fA$, then
\[
\beta^{-k} (A , 0 , 0 , \dots ) =  (0, 0, \dots , 0, A,  \alpha (A) , \alpha^{2} (A) , \dots).
\]
Therefore, $\bigcup_{k\ge0} \beta^{-k} j(\fA)$ is dense in $\fB$.
\end{proof}

We now fix some notation and use the previous result to construct a useful embedding of
$\fA \times_{\alpha} \bbZ^+$.

Let $\A$ be an operator algebra and let $\alpha$ be a completely contractive endomorphism of $\A$.
If $\pi$ is a completely contractive
representation of $\A$ on a Hilbert space $\H$, we define
 \[
 \widetilde{\pi}:\A \longrightarrow \B(\H\otimes\ell^2(\bbZ^+))
 \]
  so that
  \begin{equation} \label{pitilde}
  \widetilde{\pi}(A)\equiv (\pi(A), \pi(\ga(A)), \pi(\ga^2(A)), \dots),\quad A \in \A.
  \end{equation}
  Let $V_{\H} \equiv I\otimes V$, where $V$ denotes the unilateral
  shift on $\ell^2(\bbZ^+)$. The pair $( \widetilde{\pi}, V_{\H})$
  forms a contractive covariant representation of $(\A, \alpha)$ and the associated representation of
  $\A \times_{\alpha} \bbZ^+$ is denoted as $\widetilde{\pi} \times V_{\H}$.
 If $\ga$ happens to be a completely isometric automorphism of $\A$, we also
  have the representation $\widehat{\pi}:\A\rightarrow \B(H\otimes
  \ell^2(\bbZ))$, such that
  $$\widehat{\pi} (A)\equiv(\dots, \pi(\ga^{-1}(A)), \pi(A), \pi(\ga(A)), \pi(\ga^2(A)), \dots),\quad A \in \A,$$
   the unitary $U_{\H}=I\otimes U$, where $U$ is the bilateral
  shift on $\ell^2(\bbZ)$ and the associated representation $\widehat{\pi} \times U_{\H}$
  of $\fA \times_{\alpha} \bbZ^+$.

\begin{prop} \label{embed}
 Let $\alpha$ is an injective endomorphism of a C*-algebra $\fA$
 and let $(\fB,\beta,j)$ be the triple of Proposition \ref{lift_end}.
 Then $\fA \times_{\alpha} \bbZ^+$ embeds completely isometrically
 in $\fB \times_{\beta} \bbZ$. Furthermore, $\fB \times_{\beta} \bbZ$ becomes
 a $\ca$-cover for $\fA \times_{\alpha} \bbZ^+$.
 \end{prop}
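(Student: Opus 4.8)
The plan is to construct a completely contractive homomorphism $\Phi\colon\fA\times_{\alpha}\bbZ^{+}\to\fB\times_{\beta}\bbZ$ from the universal property of the semicrossed product, and then to see that it is completely isometric by recognizing the Peters regular representation of $\fA\times_{\alpha}\bbZ^{+}$ as a compression of $\Phi$; this is the natural substitute, for injective endomorphisms, of the familiar embedding of a semicrossed product by an automorphism into its crossed product. Let $u$ be the canonical unitary of $\fB\times_{\beta}\bbZ$, so that $ubu^{*}=\beta(b)$ for $b\in\fB$. Since $\beta j=j\alpha$ by Proposition \ref{lift_end}, we get $u\,j(A)\,u^{*}=\beta(j(A))=j(\alpha(A))$, that is $j(A)\,u^{*}=u^{*}\,j(\alpha(A))$ for every $A\in\fA$; as $j$ is a $*$-representation and $u^{*}$ is a unitary, $(j,u^{*})$ is an isometric covariant representation of $(\fA,\alpha)$ in the sense of Definition \ref{def:sem}. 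By the universal property of $\fA\times_{\alpha}\bbZ^{+}$ this pair induces a completely contractive homomorphism $\Phi$ with $\Phi|_{\fA}=j$ and $\Phi(\fV)=u^{*}$; since its generators lie in $\fB\times_{\beta}\bbZ$, so does its range.

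To show that $\Phi$ is a complete isometry I would pass to a concrete model. Choose a faithful non-degenerate $*$-representation $\rho\colon\fB\to\B(\H_{0})$ and realize $\fB\times_{\beta}\bbZ$ through its regular representation on $\H_{0}\otimes\ell^{2}(\bbZ)$: the algebra $\fB$ acts by the diagonal operator $\widehat{\rho}(b)$ with $n$-th entry $\rho(\beta^{n}(b))$, and $u$ is carried to $U_{\H_{0}}^{*}$, where $U_{\H_{0}}=I\otimes U$ and $U$ is the bilateral shift; this representation is faithful because $\bbZ$ is amenable. Under this identification $\Phi$ is the homomorphism $A\mapsto\widehat{\rho}(j(A))$, $\fV\mapsto U_{\H_{0}}$. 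The subspace $\M:=\H_{0}\otimes\ell^{2}(\bbZ^{+})$ is invariant for the diagonal operators $\widehat{\rho}(j(\fA))$ and for $U_{\H_{0}}$, hence for the operator algebra $\Phi(\fA\times_{\alpha}\bbZ^{+})$; and since $\beta^{n}j=j\alpha^{n}$, the compression of $\widehat{\rho}(j(A))$ to $\M$ equals $\widetilde{\pi}(A)$ for $\pi:=\rho\circ j$, in the notation of \eqref{pitilde}, while the compression of $U_{\H_{0}}$ is the unilateral shift $V_{\H_{0}}$. Thus the compression of $\Phi$ to $\M$ is precisely the regular representation $\widetilde{\pi}\times V_{\H_{0}}$ of $\fA\times_{\alpha}\bbZ^{+}$, which is completely isometric because $\pi$ is faithful (Peters \cite{Pet}). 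As compression to an invariant subspace is completely contractive, $\|\Phi^{(k)}(X)\|\ge\|(\widetilde{\pi}\times V_{\H_{0}})^{(k)}(X)\|=\|X\|$ for every $X\in M_{k}(\fA\times_{\alpha}\bbZ^{+})$ and every $k$; combined with the complete contractivity of $\Phi$ this proves that $\Phi$ is a complete isometry.

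It remains to check that $\fB\times_{\beta}\bbZ$ is a $\ca$-cover for the completely isometric copy $\Phi(\fA\times_{\alpha}\bbZ^{+})$. The $\ca$-algebra it generates contains $j(\fA)$, the unitary $u^{*}=\Phi(\fV)$ and its adjoint $u$; hence it contains $u^{*k}j(A)u^{k}=\beta^{-k}(j(A))$ for all $A\in\fA$ and all $k\ge0$, and therefore it contains $\overline{\bigcup_{k\ge0}\beta^{-k}(j(\fA))}$, which by Proposition \ref{lift_end} is all of $\fB$. Together with $u$ this $\ca$-algebra thus contains $C^{*}(\fB,u)=\fB\times_{\beta}\bbZ$, so equality holds and $\fB\times_{\beta}\bbZ=C^{*}\big(\Phi(\fA\times_{\alpha}\bbZ^{+})\big)$, which is the assertion.

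The only ingredient beyond routine bookkeeping is Peters' theorem that the regular representation $\widetilde{\pi}\times V_{\H}$ associated with a faithful representation $\pi$ of a $\ca$-algebra is completely isometric on the semicrossed product --- a fact that simultaneously encodes the coincidence of the contractive and isometric semicrossed products in the selfadjoint case. The rest is a matter of matching conventions, so that the bilateral shift restricts to the unilateral shift on $\ell^{2}(\bbZ^{+})$ and the positive-index diagonal blocks of $\widehat{\rho}\circ j$ reproduce $\widetilde{\pi}$; the faithfulness of the regular representation of $\fB\times_{\beta}\bbZ$ used along the way is immediate from amenability of $\bbZ$. Note that a priori the universal property yields only complete contractivity of $\Phi$, and the compression argument is precisely what prevents a collapse.
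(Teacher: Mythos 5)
Your proof is correct, and it runs on the same engine as the paper's: Peters' fact that the orbit representation $\widetilde{\pi}\times V_{\H}$ is completely isometric for a faithful $\pi$, faithfulness of a regular-type representation of $\fB\times_{\beta}\bbZ$, and a comparison between the bilateral picture and its compression to $\H\otimes\ell^{2}(\bbZ^{+})$. The one genuine difference is the direction in which the representation is produced: the paper starts from a faithful representation $\pi$ of $\fA$ and must \emph{extend} it (via the decomposition into cyclic representations) to a representation $\pi_{\beta}$ of $\fB$ with $\H$ reducing, and then needs the inductivity argument over $\bigcup_{k}\beta^{-k}j(\fA)$ to see that $\widehat{\pi_{\beta}}$ is faithful on $\fB$; you instead start from a faithful representation $\rho$ of $\fB$ and set $\pi=\rho\circ j$, so faithfulness at both levels is automatic and the extension and inductivity steps disappear. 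This buys a cleaner argument for exactly the statement at hand, at the (negligible) cost of not showing that \emph{every} faithful representation of $\fA$ can be used. You also make explicit two points the paper leaves to the phrase ``comparing the ranges'': the construction of the embedding from the covariant pair $(j,u^{*})$ via the universal property, and the $\ca$-cover computation using $u^{*k}j(A)u^{k}=\beta^{-k}(j(A))$ together with the minimality of $\fB$ from Proposition \ref{lift_end}; both are correct and worth recording.
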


 \begin{proof}
 Let $\pi$ be a faithful representation of $\fA$ on a Hilbert space $\H$.
 Since every representation of $\fA$ is a direct sum of cyclic representations, the GNS construction implies
 that there exists a representation $\pi_{\beta}$ of $\fB$ on a Hilbert space $\H_{\beta} \supseteq \H$ so that
 $\H $ is reducing for $\pi_{\beta}(j(\fA))$ and $\pi_{\beta}(j(A))\mid_{ \H}= \pi(A)$ for all $A \in \fA$.

 By gauge invariance, $\widetilde{\pi} \times V_{\H}$ is a completely isometric representation for $\fA \times_{\alpha} \bbZ^+$;
 therefore the same is true for the representation $\widehat{\pi_{\beta}}\, j\times U_{\H_{\beta}}$.
 Now notice that the representation $\widehat{\pi_{\beta}}$ is faithful on
  $\bigcup_{k\ge0} \beta^{-k} j(\fA)$ and so,  by inductivity, on all of $\fB$. By gauge invariance, the representation
  $\widehat{\pi_{\beta}}\times U_{\H_{\beta}}$ is also faithful on $\fB \times_{\beta} \bbZ$. The
  proposition now follows by comparing the ranges of $\widehat{\pi_{\beta}} \, j\times U_{\H_{\beta}}$ and
  $\widehat{\pi_{\beta}}\times U_{\H_{\beta}}$.
   \end{proof}

   As we shall see in Theorem \ref{cstar},
   $\fB \times_{\beta} \bbZ$ is actually the $\ca$-envelope of $\fA \times_{\alpha} \bbZ^+$.

Let $\A$ be an operator algebra and let $\C$ be a $\ca$-cover of $\A$.
Let $\alpha$ be a $*$-endomorphism of $\C$ that leaves invariant both
$\A$ and $\J_{\A}$ and let $\dot{\ga}:\C/\J_{\A}  \rightarrow \C/\J_{\A}$ be defined
as $\dot{\ga}(X+\J_{\A})=\ga(X)+\J_{\A}$, $X \in \C$. In this context, there are two relative semicrossed products
to be considered, $\A \times_{\C , \alpha} \bbZ^+$ and $\A/\J_{\A} \times_{\C /\J_{\A}, \dot{\alpha}} \bbZ^+$.
The following proposition, which clarifies the relation between these two semicrossed product,
is an application of two significant recent results in
the theory of maximal dilations for completely contractive maps.
First, Dritschel and McCullough \cite{DritsM} have recently proven that every
completely contractive map $\phi : \A \rightarrow B(\H)$ admits a maximal dilation
$(\Phi , \K)$, i.e., a dilation $\Phi : \A \rightarrow B(\K)$ so that any further dilation of $\Phi$ has $\Phi$ as a direct summand.
Furthermore, Muhly and Solel \cite{MS} have shown
that any such maximal dilation $\Phi$ extends (uniquely) to a $*$-representation of any $\ca$-cover of $\A$.

\begin{prop}\label{cis}
Let $\A$ be an operator algebra, $\C$ be a $\ca$-cover of $\A$
and let $\alpha$ be a $*$-endomorphism of $\C$ that leaves invariant both
$\A$ and $\J_{\A}$. Then the relative semicrossed products $\A \times_{\C , \alpha} \bbZ^+$
and  $\A/\J_{\A} \times_{\C /\J_{\A}, \dot{\alpha}} \bbZ^+$ are completely
isometrically isomorphic.
\end{prop}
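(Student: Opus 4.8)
The plan is to exploit the two dilation-theoretic facts just quoted, namely the Dritschel--McCullough maximal dilation theorem and the Muhly--Solel extension result, in order to show that contractive covariant representations of $(\A,\alpha)$ that are ``$*$-extendable to $\C$'' match up, via passage to the quotient $\C/\J_{\A}$, with contractive covariant representations that are ``$*$-extendable to $\C/\J_{\A}$''. Concretely, both relative semicrossed products sit inside Peters' semicrossed products $\C\times_\alpha\bbZ^+$ and $(\C/\J_{\A})\times_{\dot\alpha}\bbZ^+$ respectively, and each admits a universal description: $\A\times_{\C,\alpha}\bbZ^+$ is the universal operator algebra for pairs $(\rho,K)$ where $\rho$ is a $*$-representation of $\C$, $K$ is a contraction, and $\rho(X)K=K\rho(\alpha(X))$ for $X\in\C$, with $\A$ sitting inside via $\rho|_{\A}$; similarly for the quotient side with $\dot\alpha$. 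So it suffices to set up a bijective, norm-preserving correspondence between these two families of covariant pairs that is compatible with the inclusion of $\A$ (resp. $\A/\J_{\A}$, which is completely isometrically $\A$ since $\J_{\A}$ is a boundary ideal).

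First I would prove the easy direction. Given a covariant pair $(\dot\rho,K)$ for $(\C/\J_{\A},\dot\alpha)$, compose $\dot\rho$ with the quotient map $q:\C\to\C/\J_{\A}$ to get a $*$-representation $\rho=\dot\rho\circ q$ of $\C$; since $q\circ\alpha=\dot\alpha\circ q$ (invariance of $\J_{\A}$ under $\alpha$), the pair $(\rho,K)$ is covariant for $(\C,\alpha)$, and its restriction to $\A$ agrees with the restriction of $\dot\rho$ to $q(\A)\cong\A$. Hence every defining representation of $\A/\J_{\A}\times_{\C/\J_{\A},\dot\alpha}\bbZ^+$ pulls back to a defining representation of $\A\times_{\C,\alpha}\bbZ^+$ with the same image on generators, giving a completely contractive surjection from $\A\times_{\C,\alpha}\bbZ^+$ onto $\A/\J_{\A}\times_{\C/\J_{\A},\dot\alpha}\bbZ^+$ that is the identity on $\A$ and sends $\fV\mapsto\dot\fV$.

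For the reverse inequality — which is the crux — I would start with an arbitrary covariant pair $(\rho,K)$ for $(\C,\alpha)$, view $\rho|_{\A}$ as a completely contractive map $\A\to B(\H)$, and apply Dritschel--McCullough to obtain a maximal dilation $\Phi:\A\to B(\K)$ of $\rho|_{\A}$. By Muhly--Solel, $\Phi$ extends uniquely to a $*$-representation $\sigma$ of $\C$; crucially, because $\Phi$ is a \emph{maximal} dilation, $\sigma$ must annihilate $\J_{\A}$ (a maximal dilation of $\A$ has the property that its $\ca$-extension is a representation of $\cenv(\A)=\C/\J_{\A}$, i.e. factors through $q$), so $\sigma$ descends to a $*$-representation $\dot\sigma$ of $\C/\J_{\A}$. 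The work is then to manufacture, on the dilation space $\K$, a contraction $\dot K$ implementing the covariance relation for $(\C/\J_{\A},\dot\alpha)$ and restricting on $\A/\J_{\A}$ appropriately. The natural route is to dilate the covariant \emph{system}, not merely $\rho|_{\A}$: one wants a simultaneous maximal dilation of the pair $(\rho|_{\A},K)$ inside the theory of covariant representations — equivalently, one should realize $(\rho,K)$ inside a representation of $\C\times_\alpha\bbZ^+$, maximally dilate that completely contractive representation of the operator algebra $\A\times_{\C,\alpha}\bbZ^+$ restricted to its copy of $\A$, and use Muhly--Solel on the $\ca$-cover $\C\times_\alpha\bbZ^+$ (Proposition \ref{embed} tells us $\C\times_\alpha\bbZ^+$, or rather its realization, is a $\ca$-cover) to get a $*$-representation, which then carries the universal isometry to an isometry and $\C$ to a $*$-representation killing $\J_{\A}$. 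Compressing back recovers the original pair dilated, and the quotiented data $(\dot\sigma,\dot\fV)$ is covariant for $(\C/\J_{\A},\dot\alpha)$ with restriction on $\A$ equal to a dilation of $\rho|_{\A}$. Since dilations do not decrease norms, this yields the matching inequality, hence the complete isometric isomorphism.

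The main obstacle I anticipate is the bookkeeping in the reverse direction: one must ensure that the maximal dilation is taken in the correct category so that \emph{both} the covariance relation and the $*$-extendability to $\C$ are preserved, and that after passing to $\C/\J_{\A}$ the isometry (resp. contraction) implementing covariance genuinely restricts to the operator on $\A$ dictated by the original pair — i.e. that the dilation is ``covariant'' and not just a dilation of the coefficient representation. Phrasing everything in terms of maximal dilations of the operator algebra $\A\times_{\C,\alpha}\bbZ^+$ sitting completely isometrically inside the $\ca$-algebra $\C\times_\alpha\bbZ^+$ (so that the Muhly--Solel extension lands automatically in a $*$-representation of $\C\times_\alpha\bbZ^+$, which then restricts to compatible $*$-data on $\C$ and an isometry on $\fV$) is the cleanest way I see to sidestep this, and reduces the whole argument to: a maximal dilation of $\A\times_{\C,\alpha}\bbZ^+$ kills the Shilov ideal of $\A\times_{\C,\alpha}\bbZ^+$ in $\C\times_\alpha\bbZ^+$, and that Shilov ideal maps onto the relevant ideal in $(\C/\J_{\A})\times_{\dot\alpha}\bbZ^+$ because $\J_{\A}$ is $\alpha$-invariant.
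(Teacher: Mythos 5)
Your first direction is fine: pulling a covariant pair of $(\C/\J_{\A},\dot{\alpha})$ back through the quotient map does give a completely contractive map $F\mapsto F'$ that fixes $\A$ and sends $\fV$ to the universal isometry. The gap is in the converse inequality, the step you yourself flag as the crux. You never actually produce, for a given covariant pair of $(\C,\alpha)$, a covariant pair of $(\C/\J_{\A},\dot{\alpha})$ whose polynomial norms dominate; instead you defer to the claim that a maximal dilation of a representation of $\A\times_{\C,\alpha}\bbZ^+$ extends to a $*$-representation of a $\ca$-cover which ``kills $\J_{\A}$,'' because the Shilov ideal of $\A\times_{\C,\alpha}\bbZ^+$ in that cover contains the ideal generated by $\J_{\A}$. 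Three problems: (i) $\C\times_{\alpha}\bbZ^+$ is a nonselfadjoint operator algebra, so Muhly--Solel requires an honest $\ca$-cover, and the one you point to via Proposition \ref{embed} is only available when $\alpha$ is injective, which Proposition \ref{cis} does not assume; (ii) even granting a cover, why the $*$-extension of a maximal dilation annihilates $\J_{\A}$ compatibly with the covariance is exactly what must be argued; and (iii) the asserted Shilov-ideal containment is equivalent to saying that quotienting by $\J_{\A}$ is completely isometric on the relative semicrossed product --- i.e.\ it is the proposition itself, so as written the reduction is circular rather than a proof.

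The paper's argument shows how to avoid ever dilating the implementing operator. Fix a faithful $*$-representation $\pi$ of $\C$ on $\H$; by gauge invariance the orbit pair $(\widetilde{\pi},V_{\H})$ with $V_{\H}=I\otimes V$ computes the norm of $F=\sum_{n} \fV^{n}A_{n}$ in $\C\times_{\alpha}\bbZ^+$, hence in $\A\times_{\C,\alpha}\bbZ^+$. One then applies Dritschel--McCullough only to the completely isometric coefficient map $\A/\J_{\A}\ni A+\J_{\A}\mapsto\pi(A)$, obtaining a maximal dilation $\Phi$ on $\K\supseteq\H$ which, by Muhly--Solel, extends to a $*$-representation of $\C/\J_{\A}$. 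No ``$\dot K$'' has to be manufactured: on the larger space one uses again the canonical shift $V_{\K}=I\otimes V$, for which covariance with $\widetilde{\Phi}$ is automatic, and since $P_{\H\otimes\ell^2(\bbZ^+)}=P_{\H}\otimes I$ and $V_{\K}|_{\H\otimes\ell^2(\bbZ^+)}=V_{\H}$, the compression of $\sum_{n}V_{\K}^{n}\widetilde{\Phi}(A_{n}+\J_{\A})$ to $\H\otimes\ell^2(\bbZ^+)$ is $\sum_{n}V_{\H}^{n}\widetilde{\pi}(A_{n})$. This yields $\|F\|\le\|F'\|$, and likewise for all matrix norms, with no appeal to Shilov ideals of the semicrossed product and no injectivity hypothesis. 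If you want to salvage your outline, this orbit-representation device is the missing construction in your ``reverse inequality'' step.
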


\begin{proof}
 Let $F=\sum_{n=0}^k \, \fV^n A_n \in \A \times_{\C , \alpha} \bbZ^+$
and $F'=\sum_{n=0}^k \, \fV^{n} (A_n + \J_{\A}) \in \A/\J_{\A} \times_{\C /\J_{\A}, \dot{\alpha}} \bbZ^+$.
We have to show that the homomorphism $F\mapsto  F'$ is a
completely isometric map.

Let $\pi$ be a faithful representation of $\C$ on a Hilbert space $\H$ and let $(\widetilde{\pi}, V_{\H})$ be as in the beginning of the section (see (\ref{pitilde})).
Consider the completely isometric map
\[
\phi:\A/\J_{\A}  \longrightarrow \B(\H): A + \J_{\A} \longmapsto \pi(A), \quad A\in \A.
\]
According to our earlier discussion, there is a
maximal dilation $(\Phi, \K)$ of $\phi$ which extends uniquely to a
representation of $\C/\J_{\A} $ such that
\[
P_{\H}\Phi(A+\J_{\A} )|_{\H}=\phi(A+\J_{\A} )=\pi(A),
\]
for all $A \in A$. Since $P_{\H\otimes\ell^2(\bbZ_+)}=
P_{\H}\otimes I$, we have that
\[
P_{\H \otimes\ell^2(\bbZ_+)}
\widetilde{\Phi}(A+\J_{\A} ))|_{\H\otimes\ell^2(\bbZ_+)}=\widetilde{\pi}(A+\J_{\A} ),
\]
for all $A \in A$. Also, $V_{\K}|_{\H\otimes\ell^2(\bbZ_+)}=V_{\H}$ and so
\begin{align*}
\|F\| &=\|\sum_{n=0}^k \, V^{n}_{\H} \widetilde{\pi}(A_n) \|   \\
&=\|P_{\H\otimes\ell^2(\bbZ_+)}\left(\sum_{n=0}^k \,
V^{n}_{\K} \widetilde{\Phi}(A_n + \J_{\A}) \right)|_{\H\otimes\ell^2(\bbZ_+)}\|\\
& \leq \| \sum_{n=0}^k \, V^{n}_{\K} \widetilde{\Phi}(A_n + \J_{\A}) \| \leq \|F'\|.
\end{align*}
The same is also true for all the matrix norms and so the map $F'\mapsto  F$ is well defined and completely contractive.
By reversing the roles of $\A$ and $\A/\J(\A)$ in
the previous arguments, we can also prove that $F \mapsto  F'$ is completely contractive,
and the conclusion follows.
\end{proof}

Now we wish to identify the $C^*$-envelope of $\A \times_{\C , \alpha} \bbZ^+$. From
the previous result we know that it coincides with the
$C^*$-envelope of $\A/\J_{\A} \times_{\C /\J_{\A}, \dot{\alpha}} \bbZ^+$. In the following
we consider the case where $\ga:\C\rightarrow \C$ is injective.
This is easily seen to imply that $\dot{\ga}:\C/\J_{\A}\rightarrow \C/\J_{\A}$ is an injective
$*$-homomorphism. Indeed,

\begin{lem}
Let $\A$ be an operator algebra, $\C$ be a $\ca$-cover of $\A$
and let $\alpha$ be an injective $*$-endomorphism of $\C$ that leaves invariant both
$\A$ and $\J_{\A}$. Then  $\dot{\ga}:\C/\J_{\A}\rightarrow \C/\J_{\A}$ is an injective
$*$-homomorphism.
\end{lem}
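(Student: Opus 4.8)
The plan is to reduce the statement to a maximality property of the \v{S}ilov ideal. Concretely, I would show that $\alpha^{-1}(\J_{\A})$ is a boundary ideal for $\A$ inside $\C$; once that is in hand, the fact that $\J_{\A}$ is the \emph{largest} boundary ideal gives $\alpha^{-1}(\J_{\A}) \subseteq \J_{\A}$, while $\alpha(\J_{\A}) \subseteq \J_{\A}$ gives the reverse inclusion, so $\alpha^{-1}(\J_{\A}) = \J_{\A}$. That equality is exactly injectivity of $\dot{\ga}$: if $\dot{\ga}(X + \J_{\A}) = 0$ then $\alpha(X) \in \J_{\A}$, hence $X \in \alpha^{-1}(\J_{\A}) = \J_{\A}$, i.e. $X + \J_{\A} = 0$.

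To verify that $\alpha^{-1}(\J_{\A})$ is a boundary ideal I would lean on two standard facts. First, an injective $*$-endomorphism of a $\ca$-algebra is completely isometric, so $\alpha$ (and each amplification $\alpha \otimes \id_{M_n}$) is completely isometric. Second, by the $\ca$-algebraic second isomorphism theorem applied to the $\ca$-subalgebra $\alpha(\C) \subseteq \C$ and the ideal $\J_{\A}$, the natural map $\alpha(\C)/(\alpha(\C) \cap \J_{\A}) \hookrightarrow \C/\J_{\A}$ is an isometric inclusion. Since $\alpha$ is isometric and $\alpha\bigl(\alpha^{-1}(\J_{\A})\bigr) = \alpha(\C) \cap \J_{\A}$, combining these yields
\[
\bigl\| X + \alpha^{-1}(\J_{\A}) \bigr\| \;=\; \bigl\| \alpha(X) + \J_{\A} \bigr\| \qquad (X \in \C),
\]
together with the analogous identity at every matrix level.

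The crux is then to compute the right-hand side when $X$ (or a matrix over $\A$) lies in $\A$. Because $\alpha$ leaves $\A$ invariant, $\alpha(X) \in \A$, so I can use the fact that the \v{S}ilov quotient $q : \C \to \C/\J_{\A} = \cenv(\A)$ restricts to a complete isometry on $\A$ to replace $\|\alpha(X) + \J_{\A}\|$ by $\|\alpha(X)\|$; then complete isometry of $\alpha$ on $\C$ brings this down to $\|X\|$. Carrying this chain out at each matrix level shows that $X \mapsto X + \alpha^{-1}(\J_{\A})$ is completely isometric on $\A$, i.e. $\alpha^{-1}(\J_{\A})$ is a boundary ideal, and then the reduction in the first paragraph finishes the proof.

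I do not expect a serious obstacle here. The single point that needs care is the displayed identity: it really does use \emph{injectivity} of $\alpha$ — without it one only gets $\|X + \alpha^{-1}(\J_{\A})\| \ge \|\alpha(X)\|$, which need not return $\|X\|$ — and it uses the $\ca$-algebraic form of the second isomorphism theorem so that the quotient norms actually agree. One must also keep the invariance $\alpha(\A) \subseteq \A$ in play, so that $\alpha(X) \in \A$ and the \v{S}ilov-quotient complete isometry can legitimately be invoked on it.
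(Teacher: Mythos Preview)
Your argument is correct and is essentially the paper's proof, just lifted one level up: where you show that $\alpha^{-1}(\J_{\A})$ is a boundary ideal in $\C$ and invoke maximality of $\J_{\A}$, the paper works directly in $\C/\J_{\A}$ and shows that $\ker\dot{\ga}$ is a boundary ideal in $\cenv(\A)$, hence zero. Since $\ker\dot{\ga}=\alpha^{-1}(\J_{\A})/\J_{\A}$, these are the same statement, and the core chain $\|\alpha(A)+\J_{\A}\|=\|\alpha(A)\|=\|A\|$ (using $\alpha(\A)\subseteq\A$, that $\J_{\A}$ is boundary, and that $\alpha$ is completely isometric) is identical in both; the paper simply avoids your detour through the second isomorphism theorem by using the first isomorphism theorem for $\dot{\ga}$ directly.
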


\begin{proof}
In that case, $\ga$ is a
completely isometric map. Therefore,
\[
\nor{q(A)+\ker\dot{\ga}}=\nor{\dot{\ga}(A+\J_{\A})} =\nor{\ga(A)+ \J_{\A}}=
\nor{\ga(A)}=\nor{A},
\]
since $\J_{\A}$ is a boundary ideal and $\ga(\A)\subseteq\A$. The same
argument holds for all the matrix norms. Thus $\ker\dot{\ga}$ is a
boundary ideal of $\C/\J_{\A}$. However, $\C/\J_{\A}$ is the $\ca$-envelope
of $\A$ and so it contains no non-trivial boundary ideals for $\A$.
Thus $\ker\dot{\ga}=(0)$.
\end{proof}

The following is the main technical result of the section.

\begin{thm}\label{cstar}
Let $\A$ be an operator algebra, $\C$ be a $\ca$-cover of $\A$  and let $\J_{\A}$ be the
\v{S}ilov ideal of $\A$ in $\C$. Let $\alpha$ be an injective $*$-endomorphism of $\C$ that leaves invariant both
$\A$ and $\J_{\A}$. Then
\[
\cenv(\A \times_{\C , \alpha} \bbZ^+  )\simeq \fB \times_{\beta} \bbZ\, ,
\]
where $(\fB, \beta, j)$ is the unique triple of Proposition \ref{lift_end} associated with the injective
$*$-endomorphism $\dot{\alpha}$ of $\C/\J_{\A}$.
\end{thm}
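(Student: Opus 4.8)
The plan is to reduce to the already-established embedding machinery. By Proposition \ref{cis} we may replace $\A \times_{\C,\alpha} \bbZ^+$ by $\A/\J_{\A} \times_{\C/\J_{\A},\dot\alpha} \bbZ^+$, and by the Lemma above $\dot\alpha$ is an injective $*$-endomorphism of the $\ca$-envelope $\C/\J_{\A} = \cenv(\A/\J_{\A})$. So without loss of generality I may assume $\C = \cenv(\A)$, i.e. $\J_{\A} = (0)$, and $\alpha$ is an injective $*$-endomorphism of $\C$ leaving $\A$ invariant; I must show $\cenv(\A \times_{\C,\alpha} \bbZ^+) \simeq \fB \times_\beta \bbZ$, where $(\fB,\beta,j)$ is the triple of Proposition \ref{lift_end} attached to $\alpha$ on $\C$.

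First I would observe that $\fB \times_\beta \bbZ$ \emph{is} a $\ca$-cover of $\A \times_{\C,\alpha} \bbZ^+$: indeed Proposition \ref{embed} (applied with $\fA = \C$) shows $\C \times_{\alpha} \bbZ^+$ embeds completely isometrically into $\fB \times_\beta \bbZ$ as a $\ca$-cover, and the relative semicrossed product $\A \times_{\C,\alpha} \bbZ^+$ sits inside $\C \times_\alpha \bbZ^+$ by definition (Definition \ref{rel}); since $\A$ generates $\C$ and $\fV$ together with $\C$ generates $\fB \times_\beta \bbZ$, the subalgebra $\A \times_{\C,\alpha}\bbZ^+$ generates $\fB\times_\beta\bbZ$ as a $\ca$-algebra. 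So it remains to show that the \v{S}ilov ideal of $\A \times_{\C,\alpha}\bbZ^+$ inside $\fB \times_\beta \bbZ$ is trivial, equivalently that $\fB\times_\beta\bbZ$ has no nonzero boundary ideal for $\A\times_{\C,\alpha}\bbZ^+$.

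To do this I would use the gauge action. The dual action $\{\gamma_t\}_{t\in\bbT}$ on $\fB \times_\beta \bbZ$ fixes $\fB$ pointwise and scales the universal unitary, and it leaves $\A \times_{\C,\alpha}\bbZ^+$ (which is spanned by the monomials $\fV^n A$, $A\in\A$, $n\ge 0$) invariant. Hence if $\J$ is a boundary ideal for $\A\times_{\C,\alpha}\bbZ^+$ in $\fB\times_\beta\bbZ$, averaging over $\gamma_t$ shows I may take $\J$ to be gauge-invariant, so $\J$ is induced by an ideal of the fixed-point algebra (the coefficient algebra, which here is a copy of $\fB$); concretely $\J \cap \fB$ is a $\beta$-invariant ideal $I$ of $\fB$, and gauge-invariance forces $\J$ to be the ideal of $\fB\times_\beta\bbZ$ generated by $I$. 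Now the quotient map $\fB\times_\beta\bbZ \to (\fB/I)\times_{\bar\beta}\bbZ$ must be completely isometric on $\A \times_{\C,\alpha}\bbZ^+$; restricting to the "$n=0$" part $\A \subseteq \fB$, it must be isometric on $\A$, and since $\C = \cenv(\A)$ has no nonzero boundary ideal for $\A$, the ideal $I \cap \C$ of $\C$ is $(0)$. Finally, because $\fB = \ol{\bigcup_{k\ge 0}\beta^{-k}j(\C)}$ and $I$ is $\beta$-invariant, $I\cap\beta^{-k}j(\C) = \beta^{-k}(I\cap j(\C)) = (0)$ for every $k$, whence $I=(0)$ and $\J=(0)$. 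Therefore $\fB\times_\beta\bbZ$ is the $\ca$-envelope.

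The main obstacle I anticipate is the passage "boundary ideal $\Rightarrow$ gauge-invariant boundary ideal" together with the structural description of gauge-invariant ideals of a crossed product by $\bbZ$: one needs that such ideals correspond bijectively to $\beta$-invariant ideals of the coefficient algebra (equivalently that the quotient of $\fB\times_\beta\bbZ$ by the ideal generated by a $\beta$-invariant $I\trianglelefteq\fB$ is canonically $(\fB/I)\times_{\bar\beta}\bbZ$), and that the completely isometric restriction map really does detect $I\cap\C$ on the copy of $\A$. These are standard facts about crossed products and about the rigidity of the \v{S}ilov ideal, but assembling them carefully — in particular verifying that the gauge-averaging argument preserves the complete-isometry property on each matrix level — is where the real work lies.
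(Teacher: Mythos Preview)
Your proposal is correct and follows essentially the same route as the paper: reduce via Proposition~\ref{cis} to the case $\C=\cenv(\A)$, use Proposition~\ref{embed} to realize $\fB\times_\beta\bbZ$ as a $\ca$-cover, and then argue that the \v{S}ilov ideal $\J$ is zero by exploiting gauge invariance together with the inductive-limit description $\fB=\ol{\bigcup_k\beta^{-k}j(\C)}$.

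The one place where the paper is cleaner is precisely the step you flag as an obstacle. There is no need to average a boundary ideal or to invoke the full correspondence between gauge-invariant ideals of $\fB\times_\beta\bbZ$ and $\beta$-invariant ideals of $\fB$. The \v{S}ilov ideal is automatically invariant under any $*$-automorphism of the $\ca$-cover that leaves the subalgebra invariant (immediate from its maximality among boundary ideals), and each gauge automorphism $\gamma_t$ does leave $\A\times_{\C,\alpha}\bbZ^+$ invariant; hence $\J$ is gauge-invariant for free. From there the paper only uses two elementary facts: a nonzero gauge-invariant ideal meets the fixed-point algebra $\fB$ nontrivially, and $\beta$ is implemented by a unitary in the crossed product so that $\J\cap\beta^{-k}j(\C)\neq 0$ forces $\J\cap j(\C)\neq 0$. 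Your detour through the quotient $(\fB/I)\times_{\bar\beta}\bbZ$ works, but it is more machinery than the argument actually requires.
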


\begin{proof} Proposition  \ref{cis} shows that it suffices to identify
the $C^*$-envelope of $\A/\J_{\A} \times_{\C /\J_{\A}, \dot{\alpha}} \bbZ^+$.

If $(\fB, \beta, j)$ is the unique triple of Proposition \ref{lift_end} associated with the injective
$*$-endomorphism $\dot{\alpha}$ of $\C/\J_{\A}$ then Proposition \ref{embed} shows that
$\C/\J_{\A} \times _{\dot{\alpha}} \bbZ^+$, and therefore $\A/\J_{\A} \times_{\C /\J_{\A}, \dot{\alpha}} \bbZ^+$,
embeds completely isometrically in  $\fB \times_{\beta} \bbZ$. Moreover,
$\fB \times_{\beta} \bbZ$ is a $\ca$-cover for $\A/\J_{\A} \times_{\C /\J_{\A}, \dot{\alpha}} \bbZ^+$.
Let $\J$ be the \v{S}ilov ideal of $\A/\J_{\A} \times_{\C /\J_{\A}, \dot{\alpha}} \bbZ^+$ in
$\fB \times_{\beta} \bbZ$. We are to show that $\J=\{0\}$.

Assume to the contrary that $\J \neq \{ 0\}$. Since $\J$ is invariant by automorphisms of the $\ca$-cover, it remains
invariant by the natural gauge action on $\fB \times_{\beta} \bbZ$. Therefore it has non-trivial intersection
with the fixed point algebra of the natural gauge action, i.e., $\J \cap \fB \neq \{ 0\}$. However
\[
\fB = \ol{ \bigcup_{k\ge0} \beta^{-k} j(\C/\J_{\A} )}
\]
and therefore by inductivity there exists $k \in \bbN$ so that
\[
 \J \cap \beta^{-k} j(\C/\J_{\A} ) \neq \{0\}.
 \]
However, $\beta$ acts by conjugating with a unitary in $\fB \times_{\beta} \bbZ$. Since $\J$ is an ideal of
$\fB \times_{\beta} \bbZ$, the above implies that
\[
 \J \cap  j(\C/\J_{\A} ) \neq \{0\}.
 \]
 But then $ j^{-1}\left( \J \cap  j(\C/\J_{\A} )\right)$ is a non-zero boundary ideal for $\A$ in
 $\C/\J_{\A}$, a contradiction.
 \end{proof}

 In \cite{DavKatsdilation} Davidson and the second named author proved that
 \[
 \cenv(\A_n \times_{\alpha} \bbZ^+) =\O_n \times_{\alpha} \bbZ,
 \]
 where $\A_n$ is Popescu's non-commutative disc algebra \cite{Pop3}, $\alpha$ a is a (completely) isometric automorphism of $\A_n$ and
 $\O_n$ denotes the Cuntz algebra generated by $n$ isometries. A dilation result in the first half of \cite{DavKatsdilation}
 reduces the problem of calculating the $\ca$-envelope of $\A_n \times_{\alpha} \bbZ^+$ to essentially verifying
 that $\cenv(\A_n \times_{\alpha}^{\is} \bbZ^+) =\O_n \times_{\alpha} \bbZ$.
 It takes the second half of \cite{DavKatsdilation} and intricate use of the representation theory for $\O_n$ to verify that claim.
 The next result establishes
  the same claim for \textit{arbitrary} operator algebras using only abstract arguments.

 \begin{thm}  \label{main}
Let $\A$ be an operator algebra and $\alpha$ be an automorphism of $\A$ that extends to a $*$-automorphism of
$\cenv(\A)$. Then, any relative semicrossed product for $(\A, \alpha)$ is completely isometrically isomorphic to $\A \times_{\alpha}^{\is} \bbZ^+ $. Hence,
\[
\cenv(\A \times_{\alpha}^{\is} \bbZ^+  )\simeq \cenv (\A) \times_{\alpha} \bbZ.
\]
\end{thm}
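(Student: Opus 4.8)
The plan is to reduce the theorem, via Propositions~\ref{cis} and~\ref{embed} and Theorem~\ref{cstar}, to a single covariant dilation statement, and then to prove that statement by a two-step dilation. Write $\dot\alpha$ for the unique $*$-automorphism of $\cenv(\A)$ extending $\alpha$. First one checks that every relative semicrossed product for $(\A,\alpha)$ is already the ``standard'' one: if $\C$ is a $\ca$-cover of $\A$ and $\td\alpha$ is a $*$-endomorphism of $\C$ extending $\alpha$, then the $*$-homomorphisms $q\,\td\alpha$ and $\dot\alpha\,q$, where $q\colon\C\to\C/\J_\A=\cenv(\A)$ is the quotient map, agree on the generating set $\A$ and hence everywhere, so $\td\alpha(\J_\A)\subseteq\J_\A$; Proposition~\ref{cis} then identifies $\A\times_{\C,\td\alpha}\bbZ^+$ completely isometrically with $\A/\J_\A\times_{\C/\J_\A,\dot\alpha}\bbZ^+$, that is, with $\A\times_{\cenv(\A),\dot\alpha}\bbZ^+$. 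By Proposition~\ref{embed} (where $\dot\alpha$ is already an automorphism, so the associated triple is $(\cenv(\A),\dot\alpha,\id)$) this algebra embeds completely isometrically in the crossed product $\ca$-algebra $\cenv(\A)\times_{\dot\alpha}\bbZ$, and Theorem~\ref{cstar} already gives $\cenv\bigl(\A\times_{\cenv(\A),\dot\alpha}\bbZ^+\bigr)\simeq\cenv(\A)\times_{\dot\alpha}\bbZ$. Thus everything follows once we show that the canonical completely contractive surjection $\A\times_{\alpha}^{\is}\bbZ^+\to\A\times_{\cenv(\A),\dot\alpha}\bbZ^+$ (which exists because $(\A\hookrightarrow\cenv(\A)\times_{\dot\alpha}\bbZ^+,\fV)$ is an isometric covariant representation of $(\A,\alpha)$) is completely isometric; equivalently, that every isometric covariant representation $(\pi,K)$ of $(\A,\alpha)$ and every $F=\sum_{n=0}^{k}\fV^{n}A_{n}$ satisfy $\|\sum_{n}K^{n}\pi(A_{n})\|\le\|\sum_{n}W^{n}\sigma(A_{n})\|$ for some $*$-representation $\sigma$ of $\cenv(\A)$ and unitary $W$ with $\sigma(c)W=W\sigma(\dot\alpha(c))$, together with the analogous matrix estimates.

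The first step replaces $K$ by a unitary without leaving the category of covariant representations of $(\A,\alpha)$. Let $W$ on $\K\supseteq\H$ be the minimal unitary dilation of $K$, so that $W|_{\H}=K$, the space $\H$ is $W$-invariant, and $\K=\bigvee_{n\ge0}W^{-n}\H$. For $\xi\in W^{-n}\H$ set $\pi'(A)\xi:=W^{-n}\pi(\alpha^{-n}(A))W^{n}\xi$ (note $W^{n}\xi\in\H$). Using $Wh=Kh$ for $h\in\H$ together with $\pi(B)K=K\pi(\alpha(B))$ one checks this is independent of $n$ and defines a homomorphism on the dense subspace $\bigcup_{n}W^{-n}\H$; since $\alpha$ is completely isometric we have $\|\pi(\alpha^{-n}(A))\|\le\|A\|$, so $\pi'$ extends to a completely contractive representation of $\A$ on $\K$ with $\pi'|_{\H}=\pi$ and $\pi'(A)W=W\pi'(\alpha(A))$. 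As $\H$ is invariant for both $W$ and $\pi'(\A)$, compression to $\H$ carries $\sum_{n}W^{n}\pi'(A_{n})$ onto $\sum_{n}K^{n}\pi(A_{n})$, so it suffices to bound $\|\sum_{n}W^{n}\pi'(A_{n})\|$, where now $W$ is unitary.

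The second step dilates $\pi'$ so that it extends to $\cenv(\A)$ while retaining covariance. Since $W$ is unitary, $W^{*}\pi'(A)W=\pi'(\alpha(A))$, i.e. $W$ implements a unitary equivalence of $\pi'\circ\alpha$ with $\pi'$. Let $\Pi$ on $\K'\supseteq\K$ be the minimal maximal dilation of $\pi'$; by Dritschel--McCullough it exists and by Muhly--Solel it extends to a $*$-representation $\bar\Pi$ of $\cenv(\A)$, and since $\dot\alpha$ is a $*$-automorphism, $\Pi\circ\alpha=(\bar\Pi\circ\dot\alpha)|_{\A}$ is again the minimal maximal dilation, now of $\pi'\circ\alpha$, on the same space $\K'$. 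By the essential uniqueness (functoriality) of minimal maximal dilations, the unitary equivalence $W$ lifts to a unitary $W'$ on $\K'$ with $W'|_{\K}=W$ and $\Pi(A)W'=W'\Pi(\alpha(A))$ for all $A$. Comparing the $*$-representations $c\mapsto W'^{*}\bar\Pi(c)W'$ and $c\mapsto\bar\Pi(\dot\alpha(c))$, which agree on the generating set $\A$, gives $\bar\Pi(c)W'=W'\bar\Pi(\dot\alpha(c))$ for all $c\in\cenv(\A)$, so $(\bar\Pi,W')$ is a unitary covariant representation of $(\cenv(\A),\dot\alpha)$. Because $W'|_{\K}=W$ and $W'$ is unitary, $\K$ reduces $W'$; combined with the semi-invariance of $\K$ for $\Pi(\A)$ this makes the compression to $\K$ of each $W'^{n}\Pi(A_{n})$ equal to $W^{n}\pi'(A_{n})$, so by linearity the compression of $\sum_{n}W'^{n}\Pi(A_{n})$ is $\sum_{n}W^{n}\pi'(A_{n})$. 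Hence $\|\sum_{n}K^{n}\pi(A_{n})\|\le\|\sum_{n}W^{n}\pi'(A_{n})\|\le\|\sum_{n}W'^{n}\bar\Pi(A_{n})\|\le\|F'\|$, the last because $(\bar\Pi,W')$ factors through $\cenv(\A)\times_{\dot\alpha}\bbZ$; running the same argument over matrix amplifications yields the complete isometry $\A\times_{\alpha}^{\is}\bbZ^+\simeq\A\times_{\cenv(\A),\dot\alpha}\bbZ^+$, and combining with the reduction above and Theorem~\ref{cstar} finishes the proof.

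I expect the main obstacle to be the second step, and within it the lifting of the intertwining unitary $W$ to a unitary $W'$ at the level of the $\ca$-envelope: an arbitrary maximal dilation of $\pi'$ need not be compatible with the $\bbZ$-action (the $*$-representation $\bar\Pi\circ\dot\alpha$ need not be unitarily equivalent to $\bar\Pi$), and the way around this is to work with the \emph{minimal} maximal dilation and to exploit its functoriality, so that the $\alpha$-twisted symmetry $W^{*}\pi'(\cdot)W=\pi'\circ\alpha$ is inherited by $\Pi$ and implemented by a genuine unitary $W'$ restricting to $W$ on $\K$. Everything else --- the first step, the identification of the relative semicrossed products, and the $\ca$-envelope computation --- is either elementary or already supplied by Propositions~\ref{cis},~\ref{embed} and Theorem~\ref{cstar}.
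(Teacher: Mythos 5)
Your reduction (via Propositions \ref{cis}, \ref{embed} and Theorem \ref{cstar}) and your first step are sound, and the first step is in substance the paper's own construction: extending the isometry to a unitary $W$ on $\K=\bigvee_{n\ge0}W^{-n}\H$ and transporting $\pi$ to $\pi'$ by $\pi'(A)\xi=W^{-n}\pi(\alpha^{-n}(A))W^{n}\xi$ is exactly the direct--limit argument in the proof of Theorem \ref{main}. The gap is in your second step, at the very point you flag as the main obstacle: the lifting of $W$ to a unitary $W'$ on the maximal dilation space. There is no ``essential uniqueness (functoriality) of minimal maximal dilations.'' A maximal dilation of $\pi'$ which is minimal over $\K$ is in general not unique up to a unitary fixing $\K$, so the unitary equivalence $W^{*}\pi'(\cdot)W=\pi'\circ\alpha$ on $\K$ need not be implemented by any unitary on a maximal dilation extending $W$. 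The bidisc algebra already defeats the principle you invoke: $\cenv(A(\bbD^2))=\rC(\bbT^2)$, completely contractive representations of $A(\bbD^2)$ are commuting pairs of contractions, maximal dilations are commuting unitary (Ando) dilations, and minimal Ando dilations are classically non-unique, so no canonical choice with the naturality you need exists. Since the existence of a covariant pair $(\bar\Pi,W')$ for $(\cenv(\A),\dot\alpha)$ compressing onto $(\pi',W)$ is essentially the entire content of the inequality you are trying to prove, this step is a genuine missing argument, not a routine verification; it is precisely the kind of difficulty that occupied the second half of \cite{DavKatsdilation} in the case of $\O_n$.

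The paper avoids the problem by never dilating $\pi'$ up to $\cenv(\A)$ at all. It runs your step 1 only for the \emph{universal} isometric covariant representation, for which $\pi$, and hence $\pi'$, is completely isometric; then $\C:=\ca(\pi'(\A))$ is a $\ca$-cover of $\A$, and since $W$ is unitary with $W^{*}\pi'(A)W=\pi'(\alpha(A))$, conjugation by $W$ is a $*$-automorphism of $\C$ extending $\alpha$. Thus $(\pi',W)$ shows that the universal isometric norm is dominated by the relative norm over $\C$, and since the reverse inequality is automatic, $\A\times_{\alpha}^{\is}\bbZ^+\simeq\A\times_{\C,\alpha}\bbZ^+$; Theorem \ref{cstar} (whose proof rests on Proposition \ref{cis}, where the covariance of the maximal dilation comes for free because it is inserted diagonally into the Fock-type pair $(\widetilde{\Phi},V_{\K})$, so no intertwiner ever has to be lifted) then transfers the computation from the cover $\C$ to $\cenv(\A)$. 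Note that your approach works with an arbitrary isometric covariant pair $(\pi,K)$, which is why you are forced up to $\cenv(\A)$; if you specialize to the universal pair and replace your step 2 by the conjugation observation above, your argument becomes the paper's proof.
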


\begin{proof}

In light of Theorem \ref{cstar}, it suffices to show that $\A \times_{\alpha}^{\is} \bbZ^+  $ dilates to a relative
semicrossed product. This is done as follows.

Let $\fV \in \A \times_{\alpha}^{\is} \bbZ^+ $ be the universal isometry acting on a Hilbert space $\fH$
and let $\H$ be the direct limit Hilbert space of the inductive system
 \[ \begin{CD}
 \fH @> \fV >> \fH @> \fV >> \fH @> \fV >> \cdots .
\end{CD}\]
For each $A \in \A$,
the commutative diagram
\[
\begin{CD}
 \fH @> \fV>> \fH @> \fV >> \fH @> \fV >> \cdots \\
 @VA VV            @V\alpha^{-1}(A) VV              @V\alpha^{-2}(A) VV                   \\
 \fH @> \fV >> \fH @> \fV >> \fH @> \fV >> \cdots
\end{CD}
\]
defines an operator $\pi(A) \in \B(\H)$. It is easily seen that $\pi$ defines a completely isometric representation of $\A$
on $\H$.
Consider now the unitary $U \in \B(\H)$ defined as
\[
U(h_1 , h_2 , h_3 , \dots ) =  ( h_2 , h_3 , \dots ), \quad
h_i \in \fH  , i \in \bbN.
\]
and notice that $\pi(\alpha(A))= U^* \pi(A)U$, $A \in \A$. Therefore, the conjugation by $U$
defines a $*$-automorphism of $\C \equiv \ca(\pi(\A))$, which extends $\alpha$ and is denoted by the same symbol as well. Therefore,
 \[
 \A \times_{\alpha}^{\is} \bbZ^+ \simeq \A \times_{\C, \alpha} \bbZ^+
 \]
 and the conclusion follows from Theorem \ref{cstar}.
\end{proof}

\begin{rem} In light of Theorem \ref{main}, we wonder whether one can compute the $\ca$-envelope of $\A \times_{\alpha}^{\is} \bbZ^+$ in the case where $\alpha$ is an endomorphism of $\A$ that extends to an injective $*$-endomorphism of $\cenv(\A)$. To do this, one will have to prove an analogue of Theorem \ref{cstar} in the case where $\alpha$ may not preserve the \v{S}ilov ideal $\J_{\A}$ of $\A$ in $\C$.
\end{rem}

\begin{rem} \label{counter}
An observation from \cite{DavKatsdilation} shows that Theorem \ref{main} fails for the contractive
semicrossed product, thus showing that the isometric and the contractive semicrossed product are not completely isometrically isomorphic in general.

 Indeed, the bidisk
algebra $A(\bbD^2)$ sits inside $\rC(\bbT^2)$, which is its C*-envelope
by Ando's theorem.  Consider the identity automorphism $\id$.
Ando's theorem also shows that the completely contractive representations
of $A(\bbD^2)$ are determined by an arbitrary pair $T_1,T_2$ of
commuting contractions.  A covariant representation of $(A(\bbD^2),\id)$
is given by such a pair and a third contraction $T_3$ which commutes
with $T_1$ and $T_2$.  If it were true that the C*-envelope of this system
was $\rC(\bbT^2) \times_\id \bbZ \simeq \rC(\bbT^3)$, then it would be true
that every commuting triple of contractions satisfies the usual von Neumann inequality.
This has been disproved by Varopoulos \cite{Var}.
\end{rem}

\section{An application to tensor algebras}

In spite of Remark \ref{counter}, there are special cases where the contractive and isometric semicrossed products coincide. The purpose of this section is to verify this in the case where $\A$ is the tensor algebra of a $\ca$-correspondence and $\alpha$ a completely isometric isomorphism of $\A$ that fixes its diagonal elementwise (Corollary \ref{main}).

The tensor algebras for $\ca$-correspondences were introduced by Muhly and Solel in \cite{MS2}. This is a broad class of non-selfadjoint operator algebras which includes as special cases Peters' semicrossed products \cite{Pet2}, Popescu's non-commutative disc algebras \cite{Pop3}, the tensor algebras of graphs (introduced in \cite{MS2} and further studied in \cite{KaK, KrP}) and the tensor algebras for multivariable dynamics \cite{DavKatsMem}, to mention but a few.

Let $\fA$ be a $\ca$-algebra and $\X$ be a (right) Hilbert $\fA$-module, whose inner product is denoted as
$\langle \, . \mid . \, \rangle$. Let $\L (\X)$ be the adjointable operators
on $\X$ and let $\K(\X)$ be the norm closed subalgebra of $\L(\X )$ generated by the
operators $\theta_{\xi , \eta}$, $\xi , \eta \in \X$, where $\theta_{\xi , \eta}(\zeta)=
\xi \langle \eta | \zeta \rangle$, $\zeta \in \X$.

A Hilbert $\fA$-module $\X$ is said to be a \textit{$\ca$-correspondence} over $\fA$ provided that there exists
a $*$-homomorphism $\phi_{\X} : \fA \rightarrow \L(\X)$. We refer to $\phi_{\X}$ as the left action of $\fA$ on
$\X$.
From
a given $\ca$-correspondence $\X$ over $\fA$, one can form
new $\ca$-correspondences over $\fA$, such as the $n$-\textit{fold ampliation} or {direct sum}
$\X^{(n)}$ (\cite[page 5]{L})
and the $n$-\textit{fold interior tensor product} $\X^{\otimes n} \equiv
\X \otimes_{\phi_{\X}}  \X \otimes_{\phi_{\X}} \dots  \otimes_{\phi_{\X}}\X$ (\cite[page 39]{L}, $n \in \bbN$,
($\X^{\otimes 0}\equiv \fA$).
These operation are defined within the category of
$\ca$-correspondences over $\fA$. (See \cite{L} for more details.)

A \textit{representation} $(\pi , t)$ of a $\ca$-correspondence $\X$ over $\fA$ on a
$\ca$-algebra $\B$ consists of
a $*$-homomorphism $\pi : \fA \rightarrow \B$
and a linear map $t : \X \rightarrow \B$ so that
\begin{itemize}
\item[(i)] $t(\xi )^* t(\eta) = \pi(\langle \xi |\, \eta \rangle)$, for $\xi , \eta \in \X$,
\item[(ii)] $\pi (A) t(\xi)= t(\phi_{\X}(A) \xi)$, for $A \in \fA$, $\xi \in \X$.
\end{itemize}
For a representation $(\pi , t)$ of  a $\ca$-correspondence $\X$ there exists a $*$-homomorphism
$\psi_t : \K(\X ) \rightarrow \B$ so that $\psi_t (\theta_{\xi , \eta}) = t(\xi)t(\eta)^*$,
for $\xi , \eta \in \X$. Following Katsura \cite{Ka}, we say that the representation $(\pi , t)$ is
\textit{covariant} iff $\psi_t ( \phi_{\X} (A)) = \pi(A)$, for all $A \in \J_{\X}$, where
\[
\J_{\X} \equiv \phi_{\X}^{-1}(\K(\X)) \cap (\ker \phi_{\X})^{\perp} .
\]
If $(\pi , t)$ is a representation of $\X$ then the $\ca$-algebra (resp. norm closed algebra) generated by the images of
$\pi$ and $t$ is denoted as $\ca(\pi , t)$ (resp. $\Alg((\pi , t)$). There is a
universal representation $(\overline{\pi}_{\fA}, \overline{t}_{\X})$
for $\X$ and the
$\ca$-algebra $\ca (\overline{\pi}_{\fA}, \overline{t}_{\X})$ is the Toeplitz-Cuntz-Pimsner
algebra $\T_{\X}$. Similarly, the
Cuntz-Pimsner algebra $\O_{\X}$ is the $\ca$-algebra generated by the image of the universal
covariant representation $(\pi_{\fA}, t_{\X})$ for $\X$.

A concrete presentation of both $\T_{\X}$ and $\O_{\X}$ can be given in terms of the generalized
Fock space $\F_{\X}$ which we now describe. The \textit{Fock space} $\F_{\X}$ over the
correspondence $\X$ is defined to be the direct sum of the $\X^{\otimes n}$ with the structure of a direct sum
of $\ca$-correspondences over $\fA$,
\[
\F_{\X}= \fA \oplus \X \oplus \X^{\otimes 2} \oplus \dots .
\]
Given $\xi \in \X$, the (left)
creation operator $t_{\infty}(\xi) \in \L(\F_{\X})$ is defined by the formula
\[
t_{\infty}(\xi)(A , \zeta_{1}, \zeta_{2}, \dots ) = (0, \xi A, \xi \otimes \zeta_1,
\xi \otimes \zeta_2, \dots),
\]
where $\zeta_n \in \X^{\otimes n}$, $n \in \bbN$, and $A \in \fA$. Also, for $A \in \fA$, we define
$\pi_{\infty}(A)\in \L(\F_{\X})$ to be the diagonal operator with $\phi_{\X}(A)\otimes id_{n-1}$
at its $\X^{\otimes n}$-th entry. It is easy to verify that $( \pi_{\infty}, t_{\infty})$ is a representation of
$\X$ which is called the \textit{Fock representation} of $\X$. Fowler and Raeburn \cite{FR} (resp. Katsura
\cite{Ka}) have shown that
the $\ca$-algebra $\ca ( \pi_{\infty}, t_{\infty})$
(resp $\ca ( \pi_{\infty}, t_{\infty})/ \K(\F_{\X\J_{\X}})$)
is isomorphic to $\T_{\X}$ (resp. $\O_{\X}$).

\begin{defn}
The \textit{tensor algebra} of a $\ca$-correspondence $\X$ over $\fA$ is the norm-closed algebra
$\alg(\overline{\pi}_{\fA}, \overline{t}_{\X})$ and is denoted as $\T_{\X}^{+}$.
\end{defn}

According to \cite{FR, Ka}, the algebras $\T_{\X}^{+} \equiv \alg(\overline{\pi}_{\fA}, \overline{t}_{\X})
$ and $ \alg( \pi_{\infty}, t_{\infty} )$ are completely isometrically isomorphic and we will
therefore identify them.

In order to prove the main results of this section, we follow the strategy of the first half of \cite{DavKatsdilation}. However, the generality in which we are working with, presents new difficulties and requires innovation. One such innovation is the following.

\begin{lem} \label{Solel}
Let $(\X, \fA)$ be a $\ca$-correspondence, let $\alpha$ be a completely isometric
automorphism of the associated tensor algebra $\T_{\X}^{+}$ and assume that
$\alpha(A)=A$, for all $A \in \fA$. Let $\pi: \T_{\X}^{+} \rightarrow B(\H)$
be a completely contractive representation of $\T_{\X}^{+}$ and let $X \in B(\H)$ be a contraction satisfying,
\[
\pi(L)X = X \pi(\alpha(L)), \mbox{ for all } L \in \T_{\X}^{+}.
\]
Then there exist isometric co-extensions $\pi'$ and $(\pi\circ \alpha)'$, of
$\pi$ and  $\pi\circ \alpha$ respectively, and an isometric co-extension $X'$ of $X$, all acting on some Hilbert space $\H'$ and satisfying
\begin{equation} \label{eq;Solel1}
\pi'(L)X'=X'(\pi\circ \alpha)'(L), \mbox{ for all } L \in \T_{\X}^{+},
\end{equation}
and
\begin{equation} \label{eq;Solel2}
\pi'(A)=(\pi \circ \alpha)'(A), \mbox{ for all } A \in \fA.
\end{equation}
\end{lem}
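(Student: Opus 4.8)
The plan is to pass to the language of Muhly--Solel covariant representations of $\ca$-correspondences, where the covariance relation becomes a pair of intertwining identities, and then to build the required common dilation by interlacing a Sch\"affer-type isometric dilation of the contraction $X$ with a Fock-space dilation of the representation. First recall (\cite{MS2}) that a completely contractive representation of $\T_{\X}^{+}$ is precisely a completely contractive covariant representation $(\sigma, t)$ of $(\X, \fA)$, where $\sigma = \pi|_{\fA}$ is a $*$-representation and $t = \pi \circ \overline{t}_{\X}$; write $(\sigma, t)$ for $\pi$. Since $\alpha$ fixes $\fA$ elementwise, $\pi \circ \alpha$ restricts on $\fA$ to the \emph{same} $*$-representation $\sigma$, so it is given by a pair $(\sigma, s)$ with $s(\xi) = \pi(\alpha(\overline{t}_{\X}(\xi)))$. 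Testing $\pi(L)X = X\pi(\alpha(L))$ on $\fA$ and on the generators $\overline{t}_{\X}(\xi)$, and using that these generate $\T_{\X}^{+}$, shows the covariance relation is equivalent to $X\sigma(a) = \sigma(a)X$ for all $a \in \fA$ together with $t(\xi)X = Xs(\xi)$ for all $\xi \in \X$. Thus the lemma amounts to dilating the two covariant representations $(\sigma, t)$ and $(\sigma, s)$, which already share the diagonal $\sigma$, together with the intertwining contraction $X$, to a pair of \emph{isometric} covariant representations (equivalently, representations that extend to the Toeplitz--Cuntz--Pimsner algebra $\T_{\X}$) that still share a common diagonal and are linked by an isometric intertwiner, all as co-extensions.

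A structural shortcut removes most of the bookkeeping. Suppose one has produced an isometric representation $\pi' \leftrightarrow (\sigma_1, t')$ of $\T_{\X}^{+}$ co-extending $\pi$, and an isometry $X'$ co-extending $X$, such that (a) $X'$ commutes with $\sigma_1(\fA)$, and (b) $\ran X'$ is invariant under every $t'(\xi)$. Then $\ran X'$ is automatically invariant under all of $\pi'(\T_{\X}^{+})$, so $(\pi\circ\alpha)'(L) := (X')^{*}\pi'(L)X'$ defines a completely contractive representation of $\T_{\X}^{+}$; via the unitary $X' : \H' \to \ran X'$ it is unitarily equivalent to the restriction of $\pi'$ to the invariant subspace $\ran X'$, which one checks is again isometric (by (a) the subspace is reducing for $\sigma_1$ and by (b) invariant for $t'$, so the restricted Fock operator is still an isometry); it satisfies $\pi'(L)X' = X'(\pi\circ\alpha)'(L)$; and it agrees with $\pi'$ on $\fA$ by (a). Hence, beyond (a) and (b), the only properties that must be built into the construction are that $\pi'$ co-extend $\pi$ and that this $(\pi\circ\alpha)'$ co-extend $\pi \circ \alpha$.

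The construction itself proceeds in two passages on one enlarged Hilbert space $\H'$. First, a Sch\"affer-type isometric dilation of $X$, organised so that the newly adjoined defect summands $\D_X = \overline{(I - X^{*}X)^{1/2}\H}$ carry covariant representations of $(\X, \fA)$ assembled from $t$ and $s$ by means of the identity $t(\xi)X = Xs(\xi)$: this turns $X$ into an isometry while extending $\pi$ and $\pi\circ\alpha$ to representations of $\T_{\X}^{+}$ that remain intertwined, still agree on $\fA$, and have the range of the new isometry invariant under the new creation operators. Second, the Muhly--Solel Fock dilation (\cite{MS2}) applied to the resulting representation of $\T_{\X}^{+}$, with the Fock summands arranged so that the isometry, extended by the shift on those summands, stays isometric with invariant range and central diagonal; invoking the shortcut then yields $\H'$, $\pi'$, $(\pi\circ\alpha)'$ and $X'$ with all the stated properties. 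The main obstacle is precisely this simultaneous bookkeeping: one must carry out, on the \emph{same} Hilbert space, two a priori unrelated dilations --- the isometric dilation of the contraction $X$, which adjoins the defect summands $\D_X$, and the Fock dilation of the covariant representation, which adjoins summands of the form $\X^{\otimes n}\otimes_{\sigma}(\cdot)$ --- while keeping $X'$ an isometry, $\pi'$ an isometric representation of $\T_{\X}^{+}$, $\ran X'$ invariant under the creation operators, the diagonal $\sigma_1$ commuting with $X'$, and all three maps genuine co-extensions. Producing a \emph{single} diagonal representation $\sigma_1$ that simultaneously serves as the diagonal of $\pi'$ and of $(\pi\circ\alpha)'$, even though the creation operators $t'$ and $s'$ genuinely differ, is the delicate point; it is here that the abstract argument replaces the correspondence-specific computations of \cite{DavKatsdilation}.
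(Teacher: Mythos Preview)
Your reduction via the ``structural shortcut''---defining $(\pi\circ\alpha)'(L):=(X')^{*}\pi'(L)X'$ once one has an isometric $\pi'$ and an isometry $X'$ with $[X',\sigma_1(\fA)]=0$ and $t'(\X)\,\ran X'\subseteq\ran X'$---is a legitimate and rather elegant observation, and it is genuinely different from the mechanism the paper uses. However, the proposal stops precisely where the work begins. You never actually build $(\sigma_1,t_1)$ on the Sch\"affer space: the phrase ``covariant representations assembled from $t$ and $s$ by means of $t(\xi)X=Xs(\xi)$'' hides a real obstruction, since an operator $r(\xi)$ on $\D_X$ satisfying $r(\xi)D_X=D_X s(\xi)$ need not exist (it would require $\ker D_X$ to be $s(\X)$-invariant, which is not given). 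The second stage is equally unspecified: how the Fock summands are ``arranged so that the isometry, extended by the shift, stays isometric with invariant range'' is exactly the problem, and you also never verify the additional requirement you yourself flag, namely that the compression $(X')^{*}\pi'(\cdot)X'$ is a co-extension of $\pi\circ\alpha$ and not merely some isometric representation intertwined with $\pi'$. In short, the shortcut correctly isolates what must be achieved, but the existence of $\pi'$ and $X'$ with properties (a), (b) \emph{and} the co-extension property for $(\pi\circ\alpha)'$ is the entire content of the lemma, and it is asserted rather than proved.

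The paper's argument avoids these bookkeeping difficulties by a different device. After a single Sch\"affer dilation $S_X$ of $X$ and a Muhly--Solel isometric dilation $\widehat\pi$ of $\pi^{(\infty)}$ (with $\widehat X=S_X\oplus I$), it forms two \emph{auxiliary} isometric representations $\rho_1,\rho_2$ of $(\X,\fA)$ whose creation operators are the twists $\widehat\pi(t_\infty(\xi))\widehat X$ and $\widehat X\,\widehat\pi(\alpha(t_\infty(\xi)))$. The point is that $\rho_1$ and $\rho_2$ compress to the \emph{same} completely contractive representation on $\H$, so the uniqueness of the minimal isometric co-extension \cite[Proposition~3.2]{MS2} yields projections $Q_i$ and a unitary $W:Q_1(\K)\to Q_2(\K)$ intertwining them on their minimal parts. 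A stabilisation---adjoining $\bigl(Q_1^{\perp}(\K)\bigr)^{(\infty)}\oplus\bigl(Q_2^{\perp}(\K)\bigr)^{(\infty)}$---promotes $W$ to a global unitary $U$ fixing $\H$ and commuting with the diagonal, and the final representations are obtained by twisting the creation operators once more by $U$; the intertwiner is $X'=U^{*}\widetilde X$. Thus the paper trades your invariant-range condition for the uniqueness of minimal isometric dilations plus an absorption trick, and this is what makes the argument go through without any correspondence-specific computation.
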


\begin{proof} First we construct isometric co-extensions $\hpi_1$ and $\hpi_2$, of $\pi$ and $\pi \circ \alpha$ respectively, and an isometric co-extension $\widehat{X}$ of $X$, with the property that
\begin{equation} \label{agree}
\hpi_1(A)=\hpi_2(A)
\end{equation}
and
\begin{equation} \label{commute}
\phantom{XXiXXX} \widehat{X} \hpi_i(A) = \hpi_i (A)\widehat{X}, \quad i=1,2,
\end{equation}
for all $A \in \fA$.

To do this, notice that $X$ commutes with $\pi(\fA)$. co-extend $X$ to its Schaeffer dilation
\[
 S_{X} \simeq
  \begin{bmatrix}
   K &  0 &  0 & 0 &  \dots \\
   D_{K}  &0 &  0  & 0 &  \dots \\
   0  &  I  & 0 &  0 & \dots \\
   0  &  0  & I & 0  &  \dots   \\
   \vdots & \vdots & \vdots & \vdots  & \ddots
   \end{bmatrix}
   \in
   B(\H^{(\infty)}),
\]
where $D_{K} = (I-K^{*}K)^{1/2}$. Let $\pi^{(\infty)}$ be the infinite ampliation of $\pi$ and notice that $S_X$ commutes with $\pi^{(\infty)}(\fA)$.
Subsequently, using \cite[Theorem 3.3]{MS2}, we obtain some isometric co-extension $\hpi$ of $\pi^{(\infty)}$, on some Hilbert space $\K=\H^{(\infty)} \oplus \M$, and let $\hpi_1 = \hpi$, $\hpi_2 = \hpi \circ \alpha$ and $\widehat{X} = S_{X} \oplus I_{\M}$. These $\hpi_1, \hpi_2$ and $\widehat{X}$ satisfy (\ref{agree}) and (\ref{commute}).

Since $\widehat X$ satisfies (\ref{commute}), the pairs $( \widehat t _i, \hpi_i | _{\fA})$, $i=1,2$, where,
\begin{align*}
\widehat t_1(\xi) &= \hpi_1 ( t_{\infty} (\xi) )\widehat X, \\
\widehat t_2(\xi) &= \widehat X \hpi_2 (t_{\infty} (\xi)), \quad \xi \in \X,
\end{align*}
define isometric representations of $(\X, \fA)$ and so there exist \break $*$-representations $\rho_i: \T_{\X} \rightarrow B(\K)$ which integrate $( \widehat t _i, \hpi_i | _{\fA})$, $i=1,2$. Since,
\begin{equation} \label{equal}
P_{\H}\rho_1(L)|_{\H}= P_{\H}\rho_2(L)|_{\H}, \foral L \in \T_{\X}^{+},
\end{equation}
the representations $\rho_i$ co-extend the same contractive representation of $\T_{\X}^{+}$ (appearing in (\ref{equal})). By the uniqueness of the minimal isometric co-extension \cite[Proposition 3.2]{MS2}, there exist projections $Q_i$ commuting with $\rho_i(\T_{\X}^{+})$, $i=1,2$, (hence commuting with $\hpi_i (\fA)$, $i=1,2$) and a unitary $W: Q_1(\K) \rightarrow Q_2(\K)$, so that
\begin{equation} \label{Wunitary}
W\rho_1(L)|_{Q_1(\K)}W^{*}= \rho_2(L)|_{Q_2(\K)}, \foral L \in \T_{\X}^{+}.
\end{equation}
Furthermore, for each $i=1,2$, $\H \subseteq Q_i(\K)$ and $W$ fixes $\H$, because both $\rho_1$ and $\rho_2$ co-extend the same completely contractive representation of $\T_{\X}^{+}$, which acts on $\H$.

For $i=1,2$, let
\[
\tpi_i(L) =\hpi_i(L)\oplus \left(\rho_1(L)|_{Q_1^{\perp}(\K)}\right)^{(\infty)} \oplus \left(\rho_2(L)|_{Q_2^{\perp}(\K)}\right)^{(\infty)}, \,\, L \in \T_{\X}^{+},
\]
and let
\[
\widetilde X = \widehat X \oplus I_{Q_1^{\perp}(\K)}^{(\infty)} \oplus I_{Q_2^{\perp}(\K)}^{(\infty)},
\]
all of them acting on
\[
\H' = \K\oplus Q_1^{\perp}(\K)^{(\infty)} \oplus Q_2^{\perp}(\K)^{(\infty)}.
\]
Because of (\ref{Wunitary}), there exists a unitary $U \in B(\H')$ which fixes $\H$, commutes with $\tpi_1(\fA)=\tpi_2(\fA)$ and satisfies
\[U\tpi_1(L)\widetilde XU^{*} = \widetilde X \tpi_2(L), \foral L \in \T_{\X}^{+}.
\]
Consider the isometric representations $(t_i , \tpi_i |_{\fA})$ of $(\X, \fA)$, where,
\begin{align*}
t_1(\xi) &= \tpi_1 ( t_{\infty} (\xi) )U, \mbox{ and,}\\
t_2(\xi) &=  \tpi_2 (t_{\infty} (\xi))U, \xi \in \X,
\end{align*}
and let $\pi_i'$, $i=1,2$, be the $*$-representations of $ \T_{\X}$ which integrate them.
Let $X' = U^* \widetilde X$ and notice that for any $L \in \T_{\X}^{+}$ we have
\[
\pi_1'(L)X'= \hpi_1(L)UU^* \widetilde X =\hpi_1(L)\widetilde X
\]
while
\[
X'\pi_2'(L) =U^*\widetilde X \hpi_2(L)U= \hpi_1(L)\widetilde X,
\]
and the conclusion follows.
\end{proof}

\begin{rem} \label{rem;Solel}
In the previous Lemma, one may take the isometric co-extension $X'$ to be the minimal isometric co-extension $X_m$ of $X$. In that case however, the co-extensions $\pi'$ and $(\pi\circ \alpha)'$ can only be considered completely contractive and not necessarily isometric.

Indeed, using Lemma \ref{Solel}, we obtain isometric co-extensions $\pi' , (\pi \circ \alpha)'$ and $ X'$ on some Hilbert space $\H'$ that satisfy (\ref{eq;Solel1}) and (\ref{eq;Solel2}).
Let $Q$ be the reducing projection for $X'$ so that $QX'|_{Q(\H')}\simeq X_m$. By
(\ref{eq;Solel2}), the projection $Q$ commutes with $\pi'(\fA)$ and $(\pi \circ \alpha)'(\fA)$ and so the completely contractive representations of $(\X, \fA)$, determined by the representations $\pi'$ and $(\pi \circ \alpha)'$ of $\fA$ and the mappings
\begin{align*}
\X \ni \xi &\longmapsto Q \pi'(t_{\infty}(\xi))|_{Q(\H')}\\
\X \ni \xi &\longmapsto Q (\pi\circ \alpha)'(t_{\infty}(\xi))|_{Q(\H')},
\end{align*}
can be integrated to the desired contractive representations of $ \T_{\X}^{+}$, satisfying the analogues of (\ref{eq;Solel1}) and (\ref{eq;Solel2}) with $X_m$ instead of $X'$.
\end{rem}

If we take $\alpha = \id$ in Lemma \ref{Solel}, then we obtain the commutant lifting Theorem of Muhly and Solel \cite{MS2}, without using the "one-step" extension in the proof. (In \cite[page 418]{MS2} the authors ask for such a proof.) Indeed

\begin{cor} \label{comlifting}
Let $(\X, \fA)$ be a $\ca$-correspondence, let $\pi: \T_{\X}^{+} \rightarrow B(\H)$
be a completely contractive representation of $\T_{\X}^{+}$ and let $X \in B(\H)$ be a contraction satisfying,
\[
\pi(L)X = X \pi(L), \mbox{ for all } L \in \T_{\X}^{+}.
\]
If $\pi_{m}$ is the minimal isometric co-extension of $\pi$, then there exists a contraction $X'$ co-extending $X$ and satisfying
\[
\pi_m(L)X'=X'\pi_m(L), \mbox{ for all } L \in \T_{\X}^{+}.
\]
\end{cor}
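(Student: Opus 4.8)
The plan is to obtain this as the $\alpha=\id$ case of Lemma~\ref{Solel}, followed by a compression to the minimal isometric co-extension. First I would apply Lemma~\ref{Solel} with $\alpha=\id$: the hypothesis $\pi(L)X=X\pi(\alpha(L))$ then reads exactly $\pi(L)X=X\pi(L)$, which is what is assumed here, so the lemma supplies isometric co-extensions $\pi'$ and $(\pi\circ\id)'$ of $\pi$ and an isometric co-extension $X_0$ of $X$, all on a common Hilbert space $\H'\supseteq\H$, with $\pi'(L)X_0=X_0(\pi\circ\id)'(L)$ for all $L\in\T_{\X}^{+}$ and $\pi'(A)=(\pi\circ\id)'(A)$ for all $A\in\fA$. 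When $\alpha=\id$ the construction inside the proof of Lemma~\ref{Solel} degenerates --- $\hpi_1=\hpi_2$, hence $\tpi_1=\tpi_2$, hence $t_1=t_2$ --- so in fact $\pi'=(\pi\circ\id)'$ and $X_0$ commutes with $\pi'(\T_{\X}^{+})$. (Alternatively one keeps the two possibly distinct co-extensions and, at the end, uses the uniqueness of the minimal isometric co-extension \cite[Proposition~3.2]{MS2} to transport $X_0$ between their minimal parts.)

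The second step is to cut down to $\pi_m$. The isometric representation $\pi'$ integrates to a $*$-representation $\rho$ of the Toeplitz-Cuntz-Pimsner algebra $\T_{\X}$ on $\H'$; put $\K_m=\overline{\rho(\T_{\X})\H}$. Because $\T_{\X}$ is a $\ca$-algebra this subspace reduces $\rho$, it contains $\H$, and the restriction of $\rho|_{\K_m}$ to $\T_{\X}^{+}$ is a minimal isometric co-extension of $\pi$, hence --- by \cite[Proposition~3.2]{MS2} --- unitarily equivalent to $\pi_m$ via a unitary fixing $\H$. I would then take $X'=P_{\K_m}X_0|_{\K_m}$. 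Since $\H\subseteq\K_m$ we get $P_{\H}X'|_{\H}=P_{\H}X_0|_{\H}=X$, so $X'$ is a contraction co-extending $X$; and since $P_{\K_m}$ commutes with $\rho(\T_{\X})\supseteq\pi'(\T_{\X}^{+})$ while $\pi'(L)|_{\K_m}$ realizes $\pi_m(L)$, compressing the identity $\pi'(L)X_0=X_0\pi'(L)$ to $\K_m$ gives exactly $\pi_m(L)X'=X'\pi_m(L)$. Transporting along the above unitary yields the required contraction on the space of $\pi_m$.

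The one point that needs care is exactly this last reduction: Lemma~\ref{Solel} only produces \emph{some} isometric co-extension, while the statement demands the \emph{minimal} one, so one must check that the commutation relation is not lost when compressing to $\K_m$. It is not, because $\K_m$ is reducing for the integrated $*$-representation of $\T_{\X}$, so $P_{\K_m}$ commutes with $\pi'(\T_{\X}^{+})$ on both sides of $\pi'(L)X_0=X_0\pi'(L)$. Everything else --- the covariance relations, integrating correspondence representations to $*$-representations of $\T_{\X}$, and the norm estimates --- is already contained in the apparatus recalled just before Lemma~\ref{Solel}.
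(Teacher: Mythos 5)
Your proposal is correct and follows essentially the same route as the paper: take $\alpha=\id$ in Lemma~\ref{Solel} and then compress the resulting isometric co-extension of $X$ to the reducing subspace on which the isometric co-extension of $\pi$ restricts to the minimal one $\pi_m$. You merely spell out what the paper leaves implicit --- that the two co-extensions coincide when $\alpha=\id$, that the minimal part is obtained as a reducing subspace via the integrated $*$-representation of $\T_{\X}$, and that reducibility is what preserves the commutation relation under compression.
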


\begin{proof}
Use Lemma \ref{Solel} to obtain isometric co-extensions $\pi', X''$ on some Hilbert space $\H'$ that do the job. (Note however that $\pi'$ may be "larger" than the minimal isometric co-extension.) There exists now a reducing subspace $\K\subseteq \H'$ for $\pi'$ so that $\pi'|_{\K} \equiv \pi_m$. Letting $X'$ be the compression of $X''$ on $\K$, the conclusion follows.
\end{proof}

A familiar $2 \times 2$ matrix trick also establishes the intertwining form of  the commutant lifting theorem for minimal isometric co-extensions.

\begin{thm}
Let $(\X, \fA)$ be a $\ca$-correspondence, let $\alpha$ be a completely isometric
automorphism of the associated tensor algebra $\T_{\X}^{+}$ and assume that
$\alpha(A)=A$, for all $A \in \fA$. Let $\pi: \T_{\X}^{+} \rightarrow B(\H)$
be a completely contractive representation of $\T_{\X}^{+}$ and let $X \in B(\H)$ be a contraction satisfying,
\[
\pi(L)X = X \pi(\alpha(L)), \mbox{ for all } L \in \T_{\X}^{+}.
\]
Then there exist an isometric co-extension $\pi_1$ of
$\pi$ and an isometric co-extension $Z$ of $X$, so that
\[
\pi_1(L)Z=Z\pi_1(\alpha(L)), \mbox{ for all } L \in \T_{\X}^{+}.
\]
\end{thm}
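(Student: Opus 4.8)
The plan is to run the given covariant pair through the $\fA$-respecting commutant lifting of Lemma \ref{Solel}, use a $2\times 2$ operator-matrix manipulation to move between the $\alpha$-twisted covariance relation and an ordinary commutation relation, and then collapse the two isometric co-extensions that Lemma \ref{Solel} produces into a single one by iterating along the $\alpha$-orbit of $\pi$.

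First I would record the $2\times 2$ reduction: on $\H\oplus\H$ the relation $\pi(L)X=X\pi(\alpha(L))$ says exactly that the contraction $\begin{sbmatrix}0 & X\\ 0 & 0\end{sbmatrix}$ commutes with $\pi(L)\oplus\pi(\alpha(L))$. Feeding $(\pi,X)$ into Lemma \ref{Solel} yields isometric co-extensions $\pi'$ of $\pi$ and $(\pi\circ\alpha)'$ of $\pi\circ\alpha$ on a common Hilbert space, together with an isometric co-extension $X'$ of $X$ — and here $X'$ is genuinely an isometry — satisfying $\pi'(L)X'=X'(\pi\circ\alpha)'(L)$ and $\pi'|_\fA=(\pi\circ\alpha)'|_\fA$. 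In the $2\times 2$ picture this says that the isometric representation $\pi'\oplus(\pi\circ\alpha)'$ co-extends $\pi\oplus(\pi\circ\alpha)$ and that $\begin{sbmatrix}0 & X'\\ 0 & 0\end{sbmatrix}$ lies in its commutant. The catch is that this produces two representations rather than one.

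To get a single isometric co-extension $\pi_1$ of $\pi$ carrying an isometry $Z\supseteq X$ with $\pi_1(L)Z=Z\pi_1(\alpha(L))$, I would use that $\alpha$ is an automorphism, so that $(\pi\circ\alpha^n,X)$ is covariant for every $n\in\bbZ$ and Lemma \ref{Solel} applies at every point of the orbit. Taking it in the minimal form of Remark \ref{rem;Solel}, which retains the minimal isometric co-extension $X_m$ of $X$ — on a Hilbert space that does not depend on $n$, and with all the arising representations of $\fA$ matching because $\alpha$ fixes $\fA$ pointwise — one obtains at each stage completely contractive co-extensions of $\pi\circ\alpha^n$ and of $\pi\circ\alpha^{n+1}$ linked by $X_m$. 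These stages should then be organized into an inductive system and glued, the uniqueness of the minimal isometric co-extension (\cite[Proposition 3.2]{MS2}) making the identifications coherent; the inductive limit then carries the sought isometric co-extension $\pi_1$ of $\pi$, and $Z$ is assembled from the isometric intertwiners produced along the orbit — whence $Z$ is itself isometric — so that $\pi_1(L)Z=Z\pi_1(\alpha(L))$.

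The crux is exactly this collapse to a single representation. The obvious shortcut — take $\pi_1$ to be the minimal isometric co-extension of $\pi$ and declare $(\pi\circ\alpha)'=\pi_1\circ\alpha$ — cannot work, since $\alpha$ need not extend to $\T_\X$ and so $\pi_1\circ\alpha$ need not be an isometric representation; both sides of the covariance relation must be constructed together and consistently over the entire orbit $\{\pi\circ\alpha^n\}$. Lemma \ref{Solel} is what provides the $\fA$-matching that makes consecutive co-extensions gluable, the uniqueness of minimal isometric co-extensions is what makes the gluing canonical, and the $2\times 2$ matrix trick is what lets one pass back and forth between the commuting formulation, where the commutant lifting of Corollary \ref{comlifting} is available, and the $\alpha$-covariant formulation at each step.
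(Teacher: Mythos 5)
Your preliminary reductions are fine (the $2\times 2$ trick, the observation that $(\pi\circ\alpha^{n},X)$ is covariant for every $n$, the appeal to Lemma \ref{Solel} and Remark \ref{rem;Solel}), but the heart of your argument --- ``organize the stages into an inductive system and glue, the uniqueness of the minimal isometric co-extension making the identifications coherent'' --- is a genuine gap rather than a proof. The co-extensions produced by Lemma \ref{Solel} and Remark \ref{rem;Solel} are not objects to which that uniqueness statement applies: in the Remark the representations are merely completely contractive (not isometric), and in the Lemma they are isometric but possibly strictly larger than the minimal isometric co-extension (this is pointed out in the proof of Corollary \ref{comlifting}). Hence there is no canonical way to identify the co-extension of $\pi\circ\alpha^{n+1}$ produced at stage $n$ with the one produced at stage $n+1$, and without such an identification your inductive system is not even defined. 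Moreover you never say what $\pi_1$ and $Z$ actually are on the glued space, nor why the covariance holds with the \emph{same} representation on both sides --- which is exactly the hard point --- and you never confront the fact that each application of Remark \ref{rem;Solel} trades an isometric representation for a merely contractive one, so a single pass along the orbit cannot deliver an isometric $\pi_1$ and an isometric $Z$ simultaneously.

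For comparison, the paper's proof starts from the minimal isometric co-extension $\pi_m$ of $\pi$ and uses that $\pi_m\circ\alpha$ is the minimal isometric co-extension of $\pi\circ\alpha$ (your dismissal of this --- ``$\alpha$ need not extend to $\T_{\X}$, so $\pi_1\circ\alpha$ need not be isometric'' --- is not decisive: for instance, for the disc algebra a M\"obius function of an isometry is again an isometry; by discarding this you lose the natural first step). It then lifts $X$ by commutant lifting to a contraction $X_1$ intertwining $\pi_m$ and $\pi_m\circ\alpha$, forms the Schaeffer dilation $X_{1,m}$, and does the real work there: Remark \ref{rem;Solel} together with a matrix-entry analysis (the Schwarz inequality and the isometricity of $\pi_m$ force the corner entries $Y^{(L)}$ and $Z^{(L)}$ to vanish) extracts a completely contractive representation $\rho$ on the defect space satisfying $\rho(L)D_{X_1}=D_{X_1}(\pi_m\circ\alpha)(L)$, and the explicit block-diagonal representation $\pi_m\oplus\rho\oplus(\rho\circ\alpha)\oplus(\rho\circ\alpha^{2})\oplus\cdots$ is shift-compatible with the matricial form of $X_{1,m}$, which is what produces the covariance with a single representation. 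Even then one round only exchanges which object is isometric, so the construction must be iterated and $\pi_1$, $Z$ obtained as \textsc{sot} limits. Your sketch contains seeds of these ideas (powers of $\alpha$, the minimal isometric dilation of the intertwiner) but none of the mechanism --- the defect-space representation, the vanishing-entry argument, the block structure, the alternating iteration and limit --- so as written it does not establish the theorem.
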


\begin{proof} Notice that if $\pi_m$ is the minimal isometric dilation of $\pi$,
then $\pi_m\circ \alpha$ is the minimal isometric dilation of
$\pi \circ \alpha$.
Therefore, by applying commutant lifting to the covariance relations,
we obtain a contraction $X_1$ on a Hilbert space $\H_1$, satisfying
\[
\pi_m(L)X_1=X_1(\pi_m\circ \alpha)(L), \foral L \in \T^{+}_{\X}.
\]
Let $X_{1,m}$ be the minimal dilation of $X_1$, i.e.,
\begin{equation} \label{matricialform}
 X_{1,m}\simeq
  \begin{bmatrix}
   X_1 &  0 &  0 & 0 &  \dots \\
   D_{X_1}  &0 &  0  & 0 &  \dots \\
   0  &  I  & 0 &  0 & \dots \\
   0  &  0  & I & 0  &  \dots   \\
   \vdots & \vdots & \vdots & \vdots  & \ddots
   \end{bmatrix}
\end{equation}
where $D_{X_1} = (I-X_1^{*}X_1)^{1/2}$.
We apply now Remark \ref{rem;Solel}
to obtain completely contractive representations $\widehat{\pi}_m$ and $\widehat{\pi_m\circ\alpha}$, which
co-extend $\pi_m$ and $\pi_m\circ \alpha$, coincide on $\fA$, and satisfy
\begin{equation} \label{inter2}
 \widehat{\pi}_m(L) X_{1,m} = X_{1,m} \widehat{\pi_m \circ \alpha}(L) \qfor L \in \T^{+}_{\X}.
\end{equation}
Assume that these dilations have the form
\[
 \widehat{\pi}_m(L) =
 \begin{bmatrix} \pi_m(L) &  0  \\  Y^{(L)}& [Y^{(L)}_{jk}]_{j,k\ge1}  \end{bmatrix}
 \text{   and  \, }
 \widehat{\pi_m \circ \alpha}(L) =
 \begin{bmatrix} \pi_m \circ \alpha(L) &  0  \\ Z^{(L)}  & [Z^{(L)}_{jk}]_{j,k\ge1}  \end{bmatrix}
\]
with regards to the decomposition of the Hilbert space that corresponds to the matricial form of $X_{1,m}$ in (\ref{matricialform}).

\vspace{3mm}
{\bf{Claim:}} \textit{$Y^{(L)}=Z^{(L)}=0$, for all $L \in \T^{+}_{\X}$.}
\vspace{1mm}

Indeed, the claim is true in the case where $L = \pi_{\infty}(A)$, $ A \in \fA$, since the restrictions of $\widehat{\pi}_m$ and $\widehat{\pi_m \circ \alpha}$ on $\fA$ are $*$-homomorphisms dilating the $*$-homomorphisms $\pi_m$ and $\pi_m \circ \alpha$ respectively.
Hence it suffices to prove the claim in the case where $L = t_{\infty}(\xi)$, $\xi \in \X$. We show that $Y^{(\xi)} = 0$; a similar argument will show that $Z^{(t_{\infty}(\xi))} = 0$.
By the Schwarz inequality for completely contractive maps on unital operator algebras we have
\[
\widehat{\pi}_m\left(t_{\infty}(\xi)\right)^*\widehat{\pi}_m(t_{\infty}(\xi)) \leq \widehat{\pi}_m\big(\pi_{\infty}(\langle \, \xi \mid \xi \, \rangle)\big).
\]
By taking into account the matricial form of $\widehat{\pi}_m$ and comparing $(1,1)$-entries in the above inequality, we obtain
\begin{equation} \label{negative}
\pi_m\left(t_{\infty}(\xi)\right)^*\pi_m(t_{\infty}(\xi))+ (Y^{(t_{\infty}(\xi))})^* Y^{(t_{\infty}(\xi))} \leq \pi_m\big(\pi_{\infty}(\langle \, \xi \mid \xi \, \rangle)\big).
\end{equation}
However the map $\pi_m$ is an isometric representation and so
\[
\pi_m\left(t_{\infty}(\xi)\right)^*\pi_m(t_{\infty}(\xi))= \pi_m\big(\pi_{\infty}(\langle \, \xi \mid \xi \, \rangle)\big).
\]
and so (\ref{negative}) obtains
\[
(Y^{(t_{\infty}(\xi))})^* Y^{(t_{\infty}(\xi))} \leq 0,
\]
which proves the claim.
\vspace{3mm}

By comparing $(2,i)$-entries, $i=2,3,\dots$, in the covariance relation (\ref{inter2}) we also obtain
\[
Y_{2,i}^{(L)}=0, \quad \text{for all } i\geq2.
\]
In addition, by comparing $(i, 1)$-entries $i=3,4,\dots$, in (\ref{inter2}) we obtain
\[
Y_{i, 2}^{(L)}D_{X_1}=0, \quad \text{for all } i\geq3,
\]
and so $Y_{i, 2}^{(L)}=0, \text{for all } i\geq3$. This combined with the Claim implies that the second row and column of $\widehat{\pi}_m(L)$, $L \in \T^{+}_{\X}$, are equal to zero, except perhaps from $Y_{2, 2}^{(L)}$. Therefore, the map
\[
\rho: \T^{+}_{\G} \longrightarrow B(D_{X_1}(\H_1)),\,\, L\longmapsto Y_{2, 2}^{(L)}
\]
is a completely contractive representation of $\T^{+}_{\X}$.
By comparing $(2,1)$-entries in the covariance relation (\ref{inter2}), we now obtain
\begin{equation} \label{arewethereyet}
 \rho(L)D_{X_1} = D_{X_1} \pi_m \circ \alpha(L), \qfor L \in  \T^{+}_{\X}.
\end{equation}
For any $L \in \T^{+}_{\X}$, we now define
\[
\pi_m '(L)=
 \begin{bmatrix}
   \pi_m(L) &  0 &  0 & 0 &  \dots \\
   0  & \rho(L) &  0  & 0 &  \dots \\
   0  &  0  & \rho(\alpha(L)) &  0 & \dots \\
   0  &  0  &0     & \rho(\alpha^{(2)}(L))  &  \dots   \\
   \vdots & \vdots & \vdots & \vdots  & \ddots
   \end{bmatrix}
\]
By (\ref{arewethereyet}), this is a completely contractive representation $\pi_m '$
on a Hilbert space $\H_2$ so that
\[
   \pi_m '(L) X_{1,m} = X_{1,m} \pi_m '\circ \alpha(L) \qfor L \in \T^{+}_{\X}.
\]

Continuing in this fashion, we obtain a sequence
\[
     (\pi , X), \,(\pi_m, X_1),\, (\pi_m ', X_{1, m}), \,((\pi_m ')_m , X_2), \, ( ((\pi_m ')_m)', X_{2,m}) \dots
\]
of pairs of operators and representations acting on Hilbert spaces
$
  \H \subseteq \H_1 \subseteq \H_2 \dots,
$
co-extending $\pi$ and $X$ and satisfying the covariance relations.
Let $\H = \bigvee_j \H_j$, and consider these pairs as
acting on $\H$ by extending them to be zero on the complement.
Let
\[
 Z = \sotlim X_j = \sotlim X_{m, j}
\]
and define $\pi_1(L)$, $L \in \T^{+}_{\X}$, as a strong limit in a similar fashion.
These limits evidently exist as in both cases the sequences consist of either
isometries  or isometric representations that decompose as infinite direct sums.
Multiplication is \sot-continuous on the ball,
hence the covariance relations hold in the limit.
\end{proof}

Combining the Proposition above with Theorem \ref{main} we obtain the main result of the section,

\begin{cor} \label{maincor}
Let $(\X, \fA)$ be a $\ca$-correspondence, let $\alpha$ be a completely isometric
automorphism of the associated tensor algebra $\T_{\X}^{+}$ and assume that
$\alpha(A)=A$, for all $A \in \fA$. Then $\T_{\X}^{+} \times_{\alpha} \bbZ^+ $ and $\T_{\X}^{+} \times_{\alpha}^{\is} \bbZ^+$ are completely isometrically isomorphic and
\[
\cenv(\T_{\X}^{+} \times_{\alpha} \bbZ^+  )\simeq \O_{\X} \times_{\alpha} \bbZ.
\]
\end{cor}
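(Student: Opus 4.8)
The plan is to deduce the statement from the dilation theorem established just above together with Theorem~\ref{main}, once one recalls that the $\ca$-envelope of a tensor algebra is the associated Cuntz--Pimsner algebra. First, since $\alpha$ is a completely isometric automorphism of $\T_{\X}^{+}$, both $\alpha$ and $\alpha^{-1}$ extend, by the universal property of the $\ca$-envelope, to $*$-homomorphisms of $\cenv(\T_{\X}^{+})$; as these extensions are mutually inverse, $\alpha$ extends to a $*$-automorphism of $\cenv(\T_{\X}^{+})$. Recalling $\cenv(\T_{\X}^{+})\simeq\O_{\X}$ (see \cite{KaK}), the crossed product $\O_{\X}\times_{\alpha}\bbZ$ is defined and Theorem~\ref{main} yields at once
\[
\cenv\big(\T_{\X}^{+}\times_{\alpha}^{\is}\bbZ^{+}\big)\simeq\cenv(\T_{\X}^{+})\times_{\alpha}\bbZ\simeq\O_{\X}\times_{\alpha}\bbZ .
\]
It therefore suffices to show that $\T_{\X}^{+}\times_{\alpha}\bbZ^{+}$ and $\T_{\X}^{+}\times_{\alpha}^{\is}\bbZ^{+}$ are completely isometrically isomorphic.

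There is a canonical completely contractive epimorphism $q\colon\T_{\X}^{+}\times_{\alpha}\bbZ^{+}\to\T_{\X}^{+}\times_{\alpha}^{\is}\bbZ^{+}$ which is the identity on $\T_{\X}^{+}$ and sends the universal contraction to the universal isometry, because every isometric covariant representation of $(\T_{\X}^{+},\alpha)$ is in particular a contractive one. To prove that $q$ is a complete isometry, fix a contractive covariant representation $(\pi,K)$ of $(\T_{\X}^{+},\alpha)$ on a Hilbert space $\H$; thus $\pi$ is completely contractive, $K$ is a contraction and $\pi(L)K=K\pi(\alpha(L))$. Since $\alpha(A)=A$ for $A\in\fA$, the dilation theorem above applies and produces an isometric co-extension $\pi_{1}$ of $\pi$ and an isometric co-extension $Z$ of $K$, acting on a common Hilbert space $\H'\supseteq\H$, with $\pi_{1}(L)Z=Z\pi_{1}(\alpha(L))$ for all $L\in\T_{\X}^{+}$; hence $(\pi_{1},Z)$ is an isometric covariant representation. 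Now $\H$ is coinvariant both for $\pi_{1}(\T_{\X}^{+})$ and for $Z$, hence for the unital algebra they generate, and so the compression $T\mapsto P_{\H}T|_{\H}$ is a completely contractive homomorphism of that algebra onto $\Alg(\pi(\T_{\X}^{+}),K)$ carrying $\pi_{1}$ to $\pi$ and $Z$ to $K$. Consequently, for every $f=\sum_{n=0}^{k}\fV^{n}L_{n}$,
\[
\big\|(\pi\times K)(f)\big\|=\big\|P_{\H}\,(\pi_{1}\times Z)(f)\,|_{\H}\big\|\le\big\|(\pi_{1}\times Z)(f)\big\|\le\big\|f\big\|_{\T_{\X}^{+}\times_{\alpha}^{\is}\bbZ^{+}},
\]
and the analogous estimate holds for every matrix amplification. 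Taking the supremum over all contractive covariant representations $(\pi,K)$ shows $\|f\|_{\T_{\X}^{+}\times_{\alpha}\bbZ^{+}}\le\|f\|_{\T_{\X}^{+}\times_{\alpha}^{\is}\bbZ^{+}}$ (the reverse inequality being automatic from the complete contractivity of $q$), so $q$ is a complete isometry, hence a completely isometric isomorphism; combined with the first paragraph this gives $\cenv\big(\T_{\X}^{+}\times_{\alpha}\bbZ^{+}\big)\simeq\O_{\X}\times_{\alpha}\bbZ$.

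The substance of the argument lies entirely in the dilation theorem invoked above, namely the extension of commutant lifting for tensor algebras to the twisted covariance relation $\pi(L)X=X\pi(\alpha(L))$; once that is available, the remaining steps are formal. The points that still require care are that the hypothesis $\alpha|_{\fA}=\id$ is exactly what permits the application of that theorem, and that the \emph{isometric} co-extensions it produces are in particular completely isometric representations of $\T_{\X}^{+}$, so that $(\pi_{1},Z)$ genuinely factors through $\T_{\X}^{+}\times_{\alpha}^{\is}\bbZ^{+}$; one should also confirm that in its conclusion $\pi_{1}$ and $Z$ are supplied on a common dilation space with a common coinvariant copy of $\H$, as the displayed intertwining relation presupposes.
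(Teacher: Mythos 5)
Your proposal is correct and follows essentially the same route as the paper, which proves the corollary exactly by combining the preceding dilation theorem (every contractive covariant pair $(\pi,K)$ co-extends to an isometric covariant pair, whence the contractive and isometric semicrossed products have the same matrix norms) with Theorem~\ref{main} and the known identification $\cenv(\T_{\X}^{+})\simeq\O_{\X}$; you merely spell out the compression-to-a-coinvariant-subspace argument that the paper leaves implicit. The only nit is the citation: the fact $\cenv(\T_{\X}^{+})\simeq\O_{\X}$ for general $\ca$-correspondences is not in \cite{KaK} (which treats graph algebras) but in the later Katsoulis--Kribs work on tensor algebras of $\ca$-correspondences.
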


In particular, the above corollary recaptures the main result of \cite{DavKatsdilation} with a different proof.



\begin{thebibliography}{99}

\bibitem{AlaP} M. Alaimia and J. Peters,
\textit{Semicrossed products generated by two commuting automorphisms}
J. Math. Anal. Appl. \textbf{285} (2003), 128--140.

\bibitem{Arv} W. Arveson,
\textit{Operator algebras and measure preserving automorphisms},
Acta Math. \textbf{118}, (1967), 95--109.

\bibitem{Arv3} W. Arveson,
\textit{Subalgebras of C*-algebras III},
Acta Math.\ \textbf{181} (1998), 159--228.



\bibitem{ArvJ} W. Arveson and K. Josephson,
\textit{Operator algebras and measure preserving automorphisms II},
J. Functional Analysis \textbf{4}, (1969), 100--134.

\bibitem{BP} D. Buske and J. Peters,
\textit{Semicrossed products of the disk algebra: contractive representations
 and maximal ideals},
Pacific J. Math. \textbf{185} (1998), 97--113.

\bibitem{DeAPet} L. DeAlba and J. Peters,
\textit{Classification of semicrossed products of finite-dimensional C*-algebras},
Proc. Amer. Math. Soc. \textbf{95} (1985), 557--564.

\bibitem{DKconj} K. Davidson, E. Katsoulis,
\textit{Isomorphisms between topological conjugacy algebras},
J. reine angew.\ Math. \textbf{621} (2008), 29-51.

\bibitem{DKsurvey} K. Davidson, E. Katsoulis,
\textit{Nonself-adjoint crossed products and dynamical systems},
Contemporary Mathematics, to appear.

\bibitem{DavKatsdilation} K. Davidson, E. Katsoulis,
\textit{Dilating covariant representations of the non-commutative disc algebras},
J. Funct. Anal. \textbf{259} (2010), Pages 817-831.

\bibitem{DavKatsMem} K. Davidson, E. Katsoulis,
\textit{Operator algebras for multivariable dynamics}, Mem. Amer. Math. Soc., to appear.

\bibitem{DritsM} M. Dritschel and S. McCullough,
\textit{Boundary representations
for families of representations of operator algebras and spaces},
 J. Operator Theory \textbf{53} (2005), 159--167

\bibitem{Dru} S. Drury,
\textit{A generalization of von Neumann's inequality to the complex ball},
Proc.\ Amer.\ Math.\ Soc.\ \textbf{68} (1978), 300--304.

\bibitem{FR} N. Fowler, I. Raeburn,
\textit{The Toeplitz algebra of a Hilbert bimodule}, Indiana Univ.
Math. J. \textbf{48} (1999), 155--181.

\bibitem{HadH}  D. Hadwin and T. Hoover,
\textit{Operator algebras and the conjugacy of transformations.},
J. Funct. Anal. \textbf{77} (1988), 112--122.

\bibitem{Hoov} T. Hoover,
\textit{Isomorphic operator algebras and conjugate inner functions},
Michigan Math. J. \textbf{39} (1992), 229--237.

\bibitem{HPW} T. Hoover, J. Peters and W. Wogen,
\textit{Spectral properties of semicrossed products},
Houston J. Math. \textbf{19} (1993), 649--660.

\bibitem{Kakar} E. Kakariadis,
\textit{Semicrossed products and reflexivity},
J. Operator Theory, to appear.

\bibitem{Ka} T. Katsura,
\textit{On $\ca$-algebras
associated with $\ca$-correspondences},
J. Funct. Anal. \textbf{217} (2004), 366--401.

\bibitem{KaK} E. Katsoulis and D. Kribs,
\textit{Isomorphisms of algebras associated with directed graphs},
Math. Ann. \textbf{330}, (2004), 709--728.

\bibitem{KrP} D. Kribs and S. Power,
\textit{Free semigroupoid algebras},
J. Ramanujan Math. Soc., textbf{19} (2004), 75--117.

\bibitem{L} E. C. Lance,
\textit{Hilbert $C\sp *$-modules. A toolkit for operator algebraists},
London Mathematical
Society Lecture Note Series, \textbf{210}, Cambridge University Press, Cambridge, 1995.

\bibitem{MM} M. McAsey and P. Muhly,
\textit{Representations of nonselfadjoint crossed products},
Proc. London Math. Soc. (3) \textbf{47} (1983), 128--144.

\bibitem{MS2} P.S. Muhly, B. Solel,
\textit{Tensor algebras over $\ca$-correspondences:
representations, dilations and $\ca$-envelopes} J. Funct. Anal.
\textbf{158} (1998), 389--457.

\bibitem{MS} P. Muhly and B. Solel,
\textit{An algebraic characterization of boundary
representations},
Nonselfadjoint Operator Algebras, Operator Theory, and Related Topics,
Birkhäuser Verlag, Basel 1998, pp. 189--196.

\bibitem{Pet} J. Peters,
\textit{Semicrossed products of C*-algebras},
J. Funct. Anal. \textbf{59} (1984), 498--534.

\bibitem{Pet2} J. Peters,
\textit{The ideal structure of certain nonselfadjoint operator algebras},
Trans. Amer. Math. Soc. \textbf{305} (1988), 333--352.

\bibitem{Pop3} G. Popescu,
\textit{Non-commutative disc algebras and their representations}
Proc. Amer. Math. Soc. \textbf{124}, (1996), 2137--2148.

\bibitem{Var} N. Varopoulos,
\textit{On an inequality of von Neumann and an application of the metric theory of tensor products to operators theory},
J. Functional Analysis \textbf{16} (1974), 83--100.

\end{thebibliography}
\end{document}